\documentclass[12pt]{article}
\usepackage{amsmath,amsthm,amscd,amssymb}
\usepackage[mathscr]{eucal}
\usepackage{amssymb}
\usepackage{latexsym}

\textwidth=150mm
\oddsidemargin=5mm
\evensidemargin=5mm
\topmargin=0mm \textheight=220mm

\theoremstyle{definition}
\newtheorem{Def}{Definition}[section]
\newtheorem{Thm}[Def]{Theorem}
\newtheorem{Prop}[Def]{Proposition}
\newtheorem{Rem}[Def]{Remark}
\newtheorem{Ex}[Def]{Example}
\newtheorem{Cor}[Def]{Corollary}

\newtheorem{Lem}[Def]{Lemma}
\newtheorem{Conj}[Def]{Conjecture}

\DeclareMathOperator{\ord}{ord}

\numberwithin{equation}{section}
\newcommand{\Q}{\mathbb{Q}}
\newcommand{\R}{\mathbb{R}}
\newcommand{\C}{\mathbb{C}}
\newcommand{\Z}{\mathbb{Z}}
\newcommand{\F}{\mathbb{F}}

\newcommand{\hh}{\mathbb{H}}
\newcommand{\spl}{\mathrm{Sp}}

\usepackage{longtable}
\newcommand{\FF}{\mathbb{F}}
\newcommand{\tE}{\widetilde{E}^{(2)}}
\newcommand{\tEE}{\widetilde{E}}
\newcommand{\tX}{\widetilde{X}}
\newcommand{\tM}{\widetilde{M}}
\newcommand{\smat}[4]{\begin{smallmatrix} #1 & #2 \\ #3 & #4 \end{smallmatrix}}

\begin{document}
\title{Weights of the mod $p$ kernel of the theta operators}
\author{Siegfried B\"ocherer, Toshiyuki Kikuta and Sho Takemori}
\maketitle

\noindent
{\bf 2010 Mathematics subject classification}: Primary 11F33 $\cdot$ Secondary 11F46\\
\noindent
{\bf Key words}: Siegel modular forms, Congruences for modular forms, Fourier coefficients, Ramanujan's operator, Filtration.

\begin{abstract}
We give some relations between the weights and the prime $p$ of elements of the mod $p$ kernel of the generalized theta operator $\Theta ^{[j]}$. In order to construct examples of the mod $p$ kernel of $\Theta ^{[j]}$ from any modular form, we introduce new operators $A^{(j)}(M)$ and show the modularity of $F|A^{(j)}(M)$ when $F$ is a modular form. Finally, we give some examples of the mod $p$ kernel of $\Theta ^{[j]}$ and the filtrations of some of them.
\end{abstract}
\section{Introduction}
Serre \cite{Se} developed the theory of $p$-adic and congruences for modular forms in one variable.
In his paper, he showed some interesting properties of the $\theta$-operator (Ramanujan's operator) acting on the $q$-expansions $\widetilde{f}=\sum _{n\ge 0}\widetilde{a_f(n)}q^n\in \mathbb{F}_{p}[\![q]\!]$ of modular forms mod $p$ defined as
\[\theta =q\frac{d}{dq} :\widetilde{f}\mapsto \theta (\widetilde{f}) =\sum _{n\ge 1}\widetilde{na_f(n)}q^n. \]
As the one of his results, he showed that the weights (more precisely, the  filtrations) of all elements of the mod $p$ kernel of $\theta $ are divisible by $p$ for the case of level $1$. Moreover, Katz \cite{Kat} showed that this property holds for the case of general level.

The first author and Nagaoka \cite{Bo-Na} extended the notion of the $\theta$-operator to the case of Siegel modular forms of degree $n$. For this operator $\Theta $ (defined in \cite{Bo-Na}),
several people found examples in the mod $p$ kernel of $\Theta $-operator (i.e., Siegel modular forms $F$ satisfying $\Theta (F)\equiv 0$ mod $p$). The first author \cite{Bo} observed that the Klingen-Eisenstein series of weight $12$ arising from Ramanujan's $\Delta$ function is such an example for $p=23$. After this, Mizumoto \cite{Mizu} found another example of weight $16$ and $p=31$, which comes from the Klingen-Eisenstein series arising from a cusp form of weight $16$. Recently, the authors, Kodama and Nagaoka \cite{Bo-Ko-Na,Ki-Ko-Na,Na,Na-Ta} constructed families of such examples of weight $\frac{n+p-1}{2}$ (resp. $\frac{n+3p-1}{2}$) and degree $n$ if the weight is even (resp. odd).

A new feature in the Siegel case is that one should study also vector valued
generalizations $\Theta^{[j]}$ of theta
operators for $0\leq j\leq n$; their $p$-adic properties were given in \cite{Bo-Na2}, e.g. $\Theta^{[1]}$
maps a Siegel modular form $\sum_T a_F(T)q^T$ to a formal series $\sum_T Ta_F(T)q^T$
with coefficients in symmetric matrices of size $n$.

In this paper, we discuss the necessity (as in the one variable cases) of the
relation between the weight and the prime $p$ for an element of the mod $p$
kernel of the generalized theta operators $\Theta ^{[j]}$, in the case where
the weight is small compared with $p$. We remark that Yamauchi \cite{Ya} and
Weissauer \cite{We} also studied the necessity in the special cases
$\Theta^{[1]}$ or $\Theta $. Moreover we construct elements of the mod $p$ kernel of
$\Theta ^{[j]}$ from arbitrary modular form. In order to do this, we
introduce an operator $A^{(j)}(M)$ and study its properties (see Section
\ref{Sec:AOP}). Finally, we give some examples of the mod $p$ kernel of
$\Theta ^{[j]}$ and introduce the filtrations of some of them (Section \ref{Ex:0}, \ref{Table}).

\section{Preliminaries}
\subsection{Siegel modular forms}
\label{sec:siegel-modular-forms}
We denote by $\hh_{n}$ the Siegel upper half space of degree $n$.
We define the action of the symplectic group $\spl_{n}(\R)$ on $\hh_{n}$ by
$gZ = (AZ + B)(CZ + D)^{-1}$ for $Z \in \hh_{n}$, $g \in \spl_{n}(\R)$.
For a holomorphic function $F:\mathbb{H}_n\longrightarrow \mathbb{C}$ and a matrix $g=\left( \begin{smallmatrix} A & B \\ C & D \end{smallmatrix}\right)\in \spl_{n}(\R)$, we define the slash operator in the usual way;
\[F|_k\; g=j(g, Z)^{-k}F(gZ),\]
where $j(g, Z)$ is defined by $\det (CZ+D)$.

Let $N$ be a natural number. In this paper, we deal with three types congruence subgroups of Siegel modular group $\Gamma _n={\rm Sp}_n(\mathbb{Z})$ as follows:
\begin{align*}
&\Gamma ^{(n)}(N):=\left\{ \begin{pmatrix}A & B \\ C & D \end{pmatrix}\in \Gamma _n \: \Big{|} \: B\equiv C \equiv 0_n \bmod{N},\ A\equiv D\equiv 1_n \bmod{N} \right\},\\
&\Gamma _1^{(n)}(N):=\left\{ \begin{pmatrix}A & B \\ C & D \end{pmatrix}
\in \Gamma _n \: \Big| \: C\equiv 0_n \bmod{N},\ A \equiv D \equiv 1_n \bmod{N} \right\},\\
&\Gamma _0^{(n)}(N):=\left\{ \begin{pmatrix}A & B \\ C & D \end{pmatrix}\in \Gamma _n \: \Big| \: C\equiv 0_n \bmod{N} \right\}.
\end{align*}

Let $\Gamma $ be the one of above modular groups of degree $n$ with level $N$. For a natural number $k$ and a Dirichlet character $\chi : (\mathbb{Z}/N\mathbb{Z})^\times \rightarrow \mathbb{C}^\times $, the space $M_k(\Gamma ,\chi )$ of Siegel modular forms of weight $k$ with character $\chi$ consists of all of holomorphic functions $F:\mathbb{H}_n\rightarrow \mathbb{C}$ satisfying
\begin{equation}
F|_{k}\: g=\chi (\det D)F(Z)\quad \text{for}\quad g=\begin{pmatrix}A & B \\ C & D \end{pmatrix}\in \Gamma.
\label{modular}\end{equation}
If $n=1$, the usual condition in the cusps should be added.

If $k=l/2$ is half-integral, then we assume that the level $N$ of $\Gamma $ satisfies $4\mid N$.
For $g \in \Gamma_{0}^{(n)}(4)$, we put
$$j_{1/2}(g, Z):=\theta^{(n)}(gZ)/\theta^{(n)}(Z),$$
where
\begin{equation*}
  \theta ^{(n)}(Z):=\sum _{X\in \mathbb{Z}^n}e^{2\pi i {}^t\!X Z X}.
\end{equation*}
Then it is known that
\[j_{1/2}(g,Z)^2=\left( \frac{-4}{\det D} \right) \det (CZ+D) \quad \text{for} \quad g=\begin{pmatrix}A & B \\ C & D\end{pmatrix} \in \Gamma _0^{(n)}(4),  \]
where $\left(\frac{-4}{*}\right)$ is the Kronecker character for the discriminant $-4$.

We define the slash operator for a holomorphic function $F:\mathbb{H}_n\rightarrow \mathbb{C}$ by
\begin{align*}
F|_k \: g:=j_{1/2}(g,Z)^{-l} F(gZ)\quad \text{for}\quad g=\begin{pmatrix}A & B \\ C & D \end{pmatrix}\in \Gamma .
\end{align*}
We define $M_k(\Gamma ,\chi )$ as the space of all of holomorphic functions $F:\mathbb{H}_n\rightarrow \mathbb{C}$ such that
\begin{align*}
F|_k \: g=\chi (\det D) F(Z)\quad \text{for}\quad g=\begin{pmatrix}A & B \\ C & D \end{pmatrix}\in \Gamma ,
\end{align*}
For more details on Siegel modular forms of half-integral weight, see \cite{And-Zu}.

In both cases, when $\chi $ is a trivial character, we write simply $M_k(\Gamma )$ for $M_k(\Gamma ,\chi )$.
Any $F \in M_k(\Gamma, \chi )$ has a Fourier expansion of the form
\[
F(Z)=\sum_{0\leq T\in\frac{1}{N}\Lambda_n}a_F(T)q^T,\quad q:=e^{2\pi i {\rm tr}(TZ)},
\quad Z\in\mathbb{H}_n,
\]
where
\[
\Lambda_n
:=\{ T=(t_{ij})\in {\rm Sym}_n(\mathbb{Q})\;|\; t_{ii},\;2t_{ij}\in\mathbb{Z}\; \}
\]
(the lattice in ${\rm Sym}_n(\mathbb{R})$ of half-integral, symmetric matrices). In particular, if $\Gamma $ satisfies $\Gamma \supset \Gamma _1^{(n)}(N)$, then the
Fourier expansion of $F$ is given by the form
\[
F(Z)=\sum_{0\leq T\in\Lambda_n}a_F(T)q^T.
\]

We denote by $\Lambda _n^+$ the set of all positive definite elements
of $\Lambda _n$. For a subring $R$ of $\mathbb{C}$, let $M_{k}(\Gamma ,\chi )_{R}
\subset M_{k}(\Gamma, \chi )$
denote the $R$-module of all modular forms whose Fourier coefficients are in $R$.

\subsection{Vector valued Siegel modular forms}
For later use, we introduce the notion of vector valued Siegel modular forms briefly.
Let $\Gamma \subset \Gamma_{n}$ be the one of subgroups of level $N$ introduced in
Subsection \ref{sec:siegel-modular-forms} and
$\rho : \mathrm{GL}_{n}(\C) \rightarrow \mathrm{GL}_{\C}(V_{\rho})$
a polynomial representation of $\mathrm{GL}_{n}(\C)$.
A holomorphic function $F : \mathbb{H}_{n} \rightarrow V_{\rho}$ is said to be a vector valued Siegel
modular form of automorphy factor $\rho$ and of level $\Gamma$ if and only if $F$ satisfies the
following property:
\begin{equation*}
  F(g Z) = \rho\left(CZ + D\right) F(Z) \quad \text{for all }
  g=\begin{pmatrix}A & B \\ C & D \end{pmatrix}\in \Gamma.
\end{equation*}
If $n = 1$, we add the cusp condition.

As in the scalar valued case, a vector valued Siegel modular form $F$ has the following
Fourier expansion:
\begin{equation*}
  F(Z)=\sum_{0\leq T\in\frac{1}{N}\Lambda_n}a_F(T)q^T,
  \quad Z\in\mathbb{H}_n, \ a_{F}(T)\in V_{\rho}.
\end{equation*}

\subsection{Congruences for modular forms}
For a prime $p$ we denote by $\nu_p$ the usual additive $p$-valuation of ${\mathbb Q}$,
normalized by $\nu_p(p)=1$. For a formal power series $F=\sum_{T\in \frac{1}{N}\,\Lambda_n} a_F(T)q^T$ with coefficients in ${\mathbb Q}$ we define
$$\nu_p(F):=\inf \left \{\nu_p(a_F(T)) \;\Big{|}\; T\in \frac{1}{N}\, \Lambda_n\right\}.$$
If the power series is actually the Fourier expansion of a nonzero modular form, then $\nu_p(F)$
is always finite.

Let $F_1$, $F_2$ be two formal power series of the forms $F_i=\sum_{0\le T\in \frac{1}{N}\Lambda _n}a_{F_i}(T)q^T$ with $a_{F_i}(T)\in {\mathbb Q}$.  We write
  \begin{equation*}
   F_1 \equiv F_2 \mod{p},
  \end{equation*}
  if and only if
$$\nu_p(F_1-F_2)>\nu_p(F_1).$$
If $\nu_p(F_1)=0$, this means
$a_{F_1}(T) \equiv a_{F_2}(T)$ mod $p$ for all $T \in \frac{1}{N}\Lambda_{n}$
  with $T \ge 0$.

Let $p$ be a prime and $\widetilde{M}_k(\Gamma )_{p^l}$ the space of modular forms mod $p^l$ for $\Gamma $ defined as
\[\widetilde{M}_k(\Gamma )_{p^l}:=\{\widetilde{F} \;|\; F\in M_k(\Gamma )_{\mathbb{Z}_{(p)}}\},\]
where $\widetilde{F}:=\sum _{T}\widetilde{a_F(T)}q^T$ and $\widetilde{a_F(T)}:=a_F(T)$ mod $p^l$.
We put
\[\widetilde{M}(\Gamma )_{p^l}:=\sum _{k\in \mathbb{Z}_{\ge 0}}\widetilde{M}_k(\Gamma)_{p^l}. \]

Let $\omega _N$ be the filtration of modular forms mod $p$ introduced by Serre, Swinnerton-Dyer: For a formal power series of the form $F=\sum _{T\in \Lambda _n}a_F(T)q^T$ (not constant modulo $p$) with $a_F(T)\in \mathbb{Z}_{(p)}$, we define
\[\omega _N(F):=\inf \{k \in \mathbb{Z}_{\ge 1} \;|\; \widetilde{F}\in \widetilde{M}_{k}(\Gamma ^{(n)}_1(N))_p \}. \]
If $F\equiv c$ mod $p$ for some $c\in \mathbb{Z}_{(p)}$, then we regard as $\omega _N(F)=0$.

It follows from \cite{Bo-Na3,Ich} immediately that
\begin{Prop}
\label{omega}
Let $p$ be an odd prime, $N$ a positive integer with $p\nmid N$ and $F\in M_k(\Gamma _1^{(n)}(N))_{\mathbb{Z}_{(p)}}$. Then
\[\omega _N(F)\equiv k \bmod{p-1}. \]
In particular, if $k<p$ and $F\not \equiv c$ mod $p$ for any $c\in \mathbb{Z}_{(p)}$, then $k=\omega _N(F)$.
\end{Prop}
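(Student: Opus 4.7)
The plan is to reduce everything to the mod $p$ comparison theorem for Siegel modular forms of different weights. Concretely, by the results of B\"ocherer--Nagaoka \cite{Bo-Na3} (and also Ichikawa \cite{Ich}), one knows that for $p\nmid N$, if $G_i\in M_{k_i}(\Gamma_1^{(n)}(N))_{\mathbb{Z}_{(p)}}$ ($i=1,2$) are such that $\widetilde{G_1}=\widetilde{G_2}\neq 0$ in $\mathbb{F}_p[\![q]\!]$, then necessarily
\[
k_1\equiv k_2\pmod{p-1}.
\]
This is the Siegel analogue of Serre--Swinnerton-Dyer's classical fact and is proved by exhibiting a weight $p-1$ modular form (a suitable generalized Eisenstein series, or Hasse invariant) which is congruent to $1$ modulo $p$, together with showing that two modular forms of different weights cannot be congruent unless their weights differ by a multiple of $p-1$.

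Granting this, the first assertion is immediate. By definition of the filtration there exists $G\in M_{\omega_N(F)}(\Gamma_1^{(n)}(N))_{\mathbb{Z}_{(p)}}$ with $\widetilde{G}=\widetilde{F}$. Since $F$ itself is a modular form of weight $k$ realizing the same mod $p$ Fourier expansion, the comparison theorem applied to the pair $(F,G)$ gives
\[
\omega_N(F)\equiv k\pmod{p-1}.
\]

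For the second assertion, assume $k<p$ and $F\not\equiv c\pmod p$ for any $c\in\mathbb{Z}_{(p)}$. The second hypothesis together with the convention $\omega_N(c)=0$ for constants ensures $\omega_N(F)\geq 1$. The trivial bound $\omega_N(F)\leq k$ combined with the congruence just proved gives
\[
0\leq k-\omega_N(F)\leq k-1\leq p-2<p-1,
\]
and $k-\omega_N(F)$ is a nonnegative multiple of $p-1$. The only such multiple strictly less than $p-1$ is $0$, so $\omega_N(F)=k$.

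The main (and only nontrivial) obstacle is the appeal to the weight comparison theorem from \cite{Bo-Na3,Ich}; the rest is a short deduction from the definition of the filtration. Once that input is accepted, no separate computation is required.
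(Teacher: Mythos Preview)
Your proof is correct and follows exactly the approach the paper intends: the paper simply states that the proposition ``follows from \cite{Bo-Na3,Ich} immediately,'' and you have spelled out precisely that deduction, invoking the weight comparison theorem from those references and then drawing the elementary consequence for the second assertion. There is nothing to add.
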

\begin{Def}
A formal power series of the form $F=\sum _{T\in \Lambda _n}a_F(T)q^T$ with $a_F(T)\in \mathbb{Z}_{p}$ is called a \textit{ $p$-adic modular form (of degree $n$)} if there exists a sequence $\{ G_{l}\in M_{k_l}(\Gamma _n)_{\mathbb{Z}_{(p)}} \}$ of modular forms such that
\begin{align*}
\lim _{l\to \infty }G_{l}=F \ \ (p\text{-adically}),
\end{align*}
in other words,
\begin{align*}
\nu_p(F-G_l)\to \infty \ \ (l\to \infty ).
\end{align*}
\end{Def}
\begin{Thm}[\cite{Bo-Na2}]
\label{p-adic}
 Let $p$ be a prime with $p\ge n+3$.
Then any $F\in M_k(\Gamma _0^{(n)}(p^m))_{\mathbb{Z}_{(p)}}$ ($m\ge 0$) is a $p$-adic modular form. In particular, we have $\widetilde{M}(\Gamma_0^{(n)}(p^m))_{p^l}\subset \widetilde{M}(\Gamma _n)_{p^l}$ for any $l\ge 0$ and $m\ge 1$.
\end{Thm}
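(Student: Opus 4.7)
The plan is to exhibit, for each $l\ge 1$, a level-$1$ Siegel modular form $G_l$ satisfying $G_l\equiv F\pmod{p^l}$; the sequence $(G_l)$ then certifies that $F$ is $p$-adic in the sense of Definition 2.6, and the ``in particular'' clause is an immediate rephrasing at each fixed $l$.

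The first step is to construct an auxiliary level-$1$ form $\equiv 1\pmod p$. The hypothesis $p\ge n+3$ gives $p-1\ge n+2$, which exceeds the absolute convergence threshold $k>n+1$ for the Siegel-Eisenstein series. Let $E:=E^{(n)}_{p-1}\in M_{p-1}(\Gamma_n)$ denote this series, normalized so that its constant Fourier coefficient equals $1$. The explicit formulas for its non-constant Fourier coefficients (a finite product of divisor sums times a generalized Bernoulli factor), combined with the von Staudt-Clausen and Kummer congruences, show that every non-constant coefficient lies in $p\mathbb{Z}_{(p)}$, so $E\equiv 1\pmod p$. By the binomial expansion $E^{p^{l-1}}\equiv 1\pmod{p^l}$, hence $F\cdot E^{p^{l-1}}\equiv F\pmod{p^l}$, and this product still lives in $M_{k+p^{l-1}(p-1)}(\Gamma_0^{(n)}(p^m))_{\mathbb{Z}_{(p)}}$. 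The problem therefore reduces to showing that any level-$p^m$ form of sufficiently large weight, viewed mod $p^l$, arises from a level-$1$ form mod $p^l$.

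The crux and main obstacle is precisely this \emph{descent in level}: the surjectivity of the natural reduction
\[
M_{k'}(\Gamma_n)_{\mathbb{Z}_{(p)}}\big/p^l \;\longrightarrow\; M_{k'}(\Gamma_0^{(n)}(p^m))_{\mathbb{Z}_{(p)}}\big/p^l
\]
for $k'$ sufficiently large relative to $l$ and $m$. The strategy is induction on $m$: a suitably normalized trace operator $\mathrm{Tr}:M_{k'}(\Gamma_0^{(n)}(p^m))\to M_{k'}(\Gamma_0^{(n)}(p^{m-1}))$ sends $F\cdot E^{p^{l-1}}$ to a level-$p^{m-1}$ form, and the $p$-power denominator introduced by the subgroup index is absorbed by inserting an extra factor $E^{p^{N}}$ for $N$ large, keeping the congruence to $F$ mod $p^l$ intact. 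The base case $m=1\Rightarrow m=0$ is the delicate step; it can be treated either by an explicit Eisenstein-series descent (in the spirit of the results cited in \cite{Bo-Na3,Ich} underlying Proposition \ref{omega}) or by a geometric argument on the Igusa tower, using the $q$-expansion principle to match Fourier series. Iterating produces the required preimage $G_l$, and the theorem follows.
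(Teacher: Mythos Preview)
The paper does not supply a proof of Theorem~\ref{p-adic}; it is quoted from \cite{Bo-Na2}, so there is no in-paper argument to compare against beyond that citation. Your overall plan---produce a level-$1$ form $E:=E^{(n)}_{p-1}\equiv 1\pmod p$ (this is exactly the result of \cite{Bo-Na} invoked later in Section~\ref{Sec:AOP}) and then descend in level by a trace together with induction on $m$---is indeed the shape of the argument in \cite{Bo-Na2}.

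The gap is in the trace step. Because $E$ has level $1$, it is fixed by every $\gamma\in\Gamma_n$, so for the trace from $\Gamma_0^{(n)}(p^m)$ to $\Gamma_0^{(n)}(p^{m-1})$ one has
\[
\mathrm{Tr}\bigl(F\cdot E^{p^{N}}\bigr)=\sum_{\gamma}(F\cdot E^{p^{N}})|_{k'}\,\gamma
=E^{p^{N}}\sum_{\gamma}F|_{k}\,\gamma
=E^{p^{N}}\cdot \mathrm{Tr}(F).
\]
Thus inserting any power of the level-$1$ Eisenstein series factors straight out of the trace and cannot force the non-identity coset contributions to be $p$-adically small. Your phrase about ``absorbing the $p$-power denominator introduced by the subgroup index'' does not describe a valid mechanism: the unnormalized trace has no denominator, and normalizing by the index does not recover $F$ either. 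This affects every step of the induction, not only the base case.

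What \cite{Bo-Na2} actually supplies is a genuine $p$-adic estimate for the non-identity terms, obtained either from an explicit analysis of $F|_{k}\gamma$ at the specific coset representatives (via their Fourier expansions) or, equivalently, by working with an auxiliary form of level $p$---not level $1$---whose slashes by the non-trivial cosets have strictly positive $p$-valuation. Your sketch does not provide this ingredient, and the alternatives you gesture at for the base case (the references \cite{Bo-Na3,Ich} underlying Proposition~\ref{omega} concern weight congruences rather than level lowering, and the Igusa-tower route is left entirely undeveloped). As written, the level-descent step is incomplete.
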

\subsection{Theta operators and their properties}

To define the operators $\Theta^{[j]}$ we need some notation: For a symmetric matrix $T$ of size $n$ we
denote by $T^{[j]}$ the matrix of size $\binom{n}{j}  \times \binom{n}{j}$ whose entries are given by the determinants of all submatrices of size $j$.
Then we can explain $\Theta^{[j]}$ by
$$F=\sum_T a_F(T) q^T\longmapsto \Theta^{[j]}(F):=\sum_T T^{[j]}\cdot a_F(T) q^T$$
for any formal power series of the type above.

These operators were introduced  in \cite{Bo-Na2},
and it was shown, that they define (vector valued) modular forms mod $p$, when applied
to modular forms.  Note that $\Theta ^{[n]}$ is the $\Theta $-operator defined in \cite{Bo-Na}, i.e., for a formal Fourier series $F=\sum _{T\in \Lambda _n}a_F(T)q^T$, we have
\begin{align*}
\Theta^{[n]}(F)=\sum_{T\in \Lambda _n}(\det T )a_F(T)q^T.
\end{align*}
Hence, we write simply as $\Theta $ for $\Theta ^{[n]}$.

For $0\le j\le n$, we observe the following (obvious) properties of these operators with respect to congruences:

\begin{Prop}
\label{ThetaOp}
(1) $\Theta ^{[j]}(F)\not \equiv 0$ mod $p$ is equivalent to the existence of $T\in \Lambda _n$ and a $j\times j$ submatrix $R$ of $T$ such that $a_F(T)\not \equiv 0$ mod $p$ and  $p\nmid \det R$.

(2) $\Theta ^{[j]}(F)\equiv 0$ mod $p$ implies $\Theta ^{[j+1]}(F)\equiv 0$ mod $p$.

(3) $\Theta ^{[j]}(F)$ is a mod $p$ singular if and only if $T^{[j]}a_F(T)\equiv 0$ mod $p$ for all $T\in \Lambda _n^+$.

(4) Let $j\ge 0$ be an integer. If $\Theta ^{[j]}(F)$ is mod $p$ singular with $p$-rank $r_p$, then  $\Theta ^{[r_p+1]}(F)\equiv 0$ mod $p$.
\end{Prop}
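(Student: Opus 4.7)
The plan is to reduce all four claims to a single linear-algebra lemma over $\mathbb{F}_p$ that I would state and prove first: \emph{for a symmetric matrix $T$ with entries in $\mathbb{Z}_{(p)}$, one has $T^{[j]}\equiv 0\bmod p$ if and only if $\mathrm{rank}_{\mathbb{F}_p}(T)<j$.} This is immediate from the definition of $T^{[j]}$ as the matrix of all $j\times j$ minors of $T$. With this in place, each part becomes a direct manipulation of the formula $\Theta^{[j]}(F)=\sum_T T^{[j]}\,a_F(T)\,q^T$.

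For (1) I would simply unwind definitions: the $q^T$-coefficient of $\Theta^{[j]}(F)$ is the scalar multiple $a_F(T)\cdot T^{[j]}$, which fails to vanish mod $p$ precisely when $a_F(T)$ is a $p$-adic unit and some entry -- equivalently, some $j\times j$ minor of $T$ -- is a $p$-adic unit. For (2) I would use Laplace expansion along a row, which writes every $(j+1)\times(j+1)$ minor of $T$ as an integer combination of $j\times j$ minors of $T$; hence $T^{[j]}\equiv 0\bmod p$ forces $T^{[j+1]}\equiv 0\bmod p$, and combined with the hypothesis $\Theta^{[j]}(F)\equiv 0\bmod p$ (which forces each $T$ to satisfy $a_F(T)\equiv 0$ or $T^{[j]}\equiv 0$ mod $p$) this yields $\Theta^{[j+1]}(F)\equiv 0\bmod p$. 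Statement (3) is simply the definition of mod $p$ singularity spelled out for $\Theta^{[j]}(F)$, whose $q^T$-coefficient is $T^{[j]}a_F(T)$.

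The only part with any substance is (4). Writing $G:=\Theta^{[j]}(F)$ and assuming $G\not\equiv 0\bmod p$ (otherwise the conclusion is vacuous), the hypothesis says every $T$ with $T^{[j]}a_F(T)\not\equiv 0\bmod p$ satisfies $\mathrm{rank}_{\mathbb{F}_p}(T)\le r_p$. I would fix $T$ and check $a_F(T)\cdot T^{[r_p+1]}\equiv 0\bmod p$ by cases. If $a_F(T)\equiv 0\bmod p$, done. Otherwise split on whether $T^{[j]}\equiv 0\bmod p$: in the ``yes'' subcase the lemma gives $\mathrm{rank}_{\mathbb{F}_p}(T)<j$, and since $G\not\equiv 0\bmod p$ automatically forces $r_p\ge j$, we conclude $\mathrm{rank}_{\mathbb{F}_p}(T)\le r_p$; in the ``no'' subcase the $q^T$-coefficient of $G$ is nonzero mod $p$, so the $p$-rank hypothesis directly gives $\mathrm{rank}_{\mathbb{F}_p}(T)\le r_p$. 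In either subcase the lemma then gives $T^{[r_p+1]}\equiv 0\bmod p$, as required.

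There is no real obstacle here; the main point requiring care is to unpack correctly the definition of $p$-rank for a mod $p$ singular vector-valued modular form and to check the mild inequality $r_p\ge j$ in the non-trivial case. Everything else is elementary linear algebra over $\mathbb{F}_p$ applied termwise to the Fourier expansion.
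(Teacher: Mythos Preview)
Your argument is correct. The paper does not actually prove this proposition: it introduces it as a list of ``obvious'' properties of the operators $\Theta^{[j]}$ and gives no further justification, so there is nothing to compare against beyond noting that your termwise linear-algebra verification is precisely the kind of check the authors left implicit.

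One small point of care in (4): you phrase the $p$-rank hypothesis as a bound on $\mathrm{rank}_{\mathbb{F}_p}(T)$, whereas the paper (following \cite{Bo-Ki}) defines the $p$-rank of a mod $p$ singular form via the ordinary rank of $T$ over $\mathbb{Q}$. This does not affect your argument, since $\mathrm{rank}_{\mathbb{F}_p}(T)\le \mathrm{rank}(T)$ always, so the ordinary-rank bound implies the $\mathbb{F}_p$-rank bound you actually use; but it would be cleaner to insert that one-line observation explicitly.
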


The following theorem is due to Katz:
\begin{Thm}[Katz \cite{Kat} (cf. Serre \cite{Se})]
\label{Kat}
Let $p$ be an arbitrary prime and $N$ a positive integer such that $N\ge 3$ and $p\nmid N$. For $f\in M_k(\Gamma _1^{(1)}(N))_{\mathbb{Z}_{(p)}}$ ($k\in \mathbb{Z}_{\ge 1}$), suppose that $\Theta ^{[1]}(f)\equiv 0$ mod $p$. Then we have $p\mid \omega_N (f)$.
\end{Thm}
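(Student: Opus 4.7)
\emph{Reduction to a filtration formula.} The plan is to deduce the theorem from the Serre--Katz filtration formula
\[
\omega_N(\theta f) \;\le\; \omega_N(f) + p + 1,
\]
with equality if and only if $p \nmid \omega_N(f)$, where $\theta = \Theta^{[1]}$ sends $\sum a_f(n) q^n$ to $\sum n\, a_f(n) q^n$. Granting this formula, the hypothesis $\theta f \equiv 0$ mod $p$ forces $\omega_N(\theta f) = 0$, so the displayed inequality is strict, and hence $p \mid \omega_N(f)$. The substance of the proof is therefore the filtration formula itself.

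\emph{Algebraic set-up.} Since $N \ge 3$ and $p \nmid N$, the moduli problem of elliptic curves with $\Gamma_1(N)$-structure is representable by a smooth compactified curve $X/\mathbb{F}_p$ carrying a (generalized) universal elliptic curve $\pi \colon \mathcal{E} \to X$. Mod-$p$ modular forms of weight $k$ on $\Gamma_1^{(1)}(N)$ are then identified with global sections of $\omega^{\otimes k}$, where $\omega = \pi_\ast \Omega^1_{\mathcal{E}/X}$, and the $q$-expansion at the cusp $\infty$ (via the Tate curve) is injective on each $H^0(X, \omega^{\otimes k})$ by the $q$-expansion principle. The Hasse invariant $A \in H^0(X, \omega^{\otimes(p-1)})$ has $q$-expansion identically $1$, so multiplication by $A$ raises the algebraic weight by $p-1$ without changing the $q$-expansion; consequently, $\omega_N(\widetilde f)$ equals the smallest weight $k$ in which $\widetilde f$ appears as a section. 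Using the Gauss--Manin connection $\nabla$ on $H^1_{dR}(\mathcal{E}/X)$ together with the Kodaira--Spencer isomorphism $\omega^{\otimes 2} \simeq \Omega^1_X(\log\{\mathrm{cusps}\})$, one then constructs an algebraic theta operator $\vartheta\colon H^0(X, \omega^{\otimes k}) \to H^0(X, \omega^{\otimes k+p+1})$ whose $q$-expansion is exactly $\theta$; the weight shift is $p+1$ rather than $2$ because the Hodge filtration is not preserved by $\nabla$ in characteristic $p$ and must be corrected by the Hasse invariant.

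\emph{Main obstacle.} The crux is to establish equality in the filtration formula when $p \nmid k$: if $f \in H^0(X, \omega^{\otimes k})$ realises its filtration $k$ and $\vartheta f = A \cdot g$ for some $g \in H^0(X, \omega^{\otimes k+2})$, one must conclude $p \mid k$. A direct local calculation at the cusp $\infty$ in the Tate coordinate, writing sections of $\omega^{\otimes k}$ as $F(q)\,\eta^k$ with $\eta$ the canonical differential, yields an expression of the shape
\[
\vartheta(F \eta^k) \;=\; \bigl(q\,\tfrac{dF}{dq}\bigr)\eta^{k+p+1} \;-\; k \cdot F \cdot \Psi \cdot A\, \eta^{k+p-1},
\]
with $\Psi$ an explicit section built from $\nabla$ applied near the Hasse locus. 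Consequently, $\vartheta f$ is $A$-divisible precisely when the coefficient $k$ of the second term vanishes modulo $p$; combined with the minimality of $f$ and the $q$-expansion principle, this gives $p \mid k$. The main technical obstacle is therefore the careful computation of $\nabla$ modulo $p$ near the cusp: it replaces, in the present higher-level setting, Serre's explicit level-one identities of the form $12\,\theta E_{p-1} \equiv -E_{p+1} E_{p-1} \pmod p$, which are no longer available in the absence of an Eisenstein series of weight $p-1$ on $\Gamma_1(N)$ with trivial $q$-expansion beyond the Hasse invariant itself.
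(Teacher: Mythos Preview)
The paper does not prove Theorem~\ref{Kat} at all; it is quoted verbatim as a result of Katz~\cite{Kat} (with a nod to Serre~\cite{Se}) and then used as a black box in the proofs of Theorems~\ref{Main2} and~\ref{Main1}. So there is no ``paper's own proof'' to compare against. Your outline is, in broad strokes, Katz's original argument: reduce to the filtration identity $\omega_N(\theta f)=\omega_N(f)+p+1$ whenever $p\nmid\omega_N(f)$, realise $\theta$ geometrically via the Gauss--Manin connection and Kodaira--Spencer, and control the weight using the Hasse invariant $A$. The reduction paragraph and the algebraic set-up are fine.

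The ``main obstacle'' paragraph, however, has a real gap. Divisibility by $A$ cannot be detected by a local calculation at the cusp $\infty$: the $q$-expansion of $A$ is identically~$1$, so $A$ is a unit in the Tate coordinate and carries no information there. The decisive computation takes place at a \emph{supersingular} point $s$, where $A$ has a simple zero. Katz shows that the holomorphically extended operator $A\vartheta\colon H^0(X,\omega^{k})\to H^0(X,\omega^{k+p+1})$ satisfies $(A\vartheta f)(s)=k\cdot c_s\cdot f(s)$ for a nonzero constant $c_s$ (arising from the derivative of $A$ and the canonical local splitting of $H^1_{\mathrm{dR}}$ at~$s$). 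Thus if $f$ realises its filtration (so $f(s)\neq 0$ for some supersingular $s$) and $p\nmid k$, then $A\vartheta f$ is not divisible by $A$, which is exactly the equality case. Your displayed formula also has mismatched weights---the two terms live in $\omega^{k+p+1}$ and $\omega^{k+2p-2}$ as written---which is another symptom of the same misplacement. Moving the local analysis from the cusp to the supersingular locus repairs the argument.
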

\begin{Rem}
In this case (the degree is $1$), the operator $\Theta ^{[1]}$ is the usual Ramanujan's operator $\theta $.
\end{Rem}
\begin{Thm}[\cite{Bo-Na}]
\label{TOP2}
Let $p$ be a prime with $p\ge n+3$. If $F\in M_k(\Gamma _n)_{\mathbb{Z}_{(p)}}$, then $\widetilde{\Theta (F)}\in \widetilde{M}_{k+p+1}(\Gamma _n)_p$. In particular $\omega _1(\Theta (F))\le k+p+1$ holds. Therefore we can regard as
\[\Theta :\widetilde{M}(\Gamma _n)_p\longrightarrow \widetilde{M}(\Gamma _n)_p. \]
\end{Thm}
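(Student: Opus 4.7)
The plan is to mimic Serre's proof in degree one. Recall that in degree one Serre uses (i) the Ramanujan identity $12\theta f - k f E_2 \in M_{k+2}$, (ii) the Kummer-type congruence $E_{p+1} \equiv E_2 \pmod p$, and (iii) the congruence $E_{p-1}\equiv 1 \pmod p$; together these rewrite $\theta f\bmod p$ as a holomorphic modular form of weight $k+p+1$. I would carry out the degree-$n$ analog.

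First, I would realize $\Theta$ on $q$-expansions as the differential operator $D_n := \det(\partial_{ij})$ with $\partial_{ij} := (1+\delta_{ij})^{-1}(2\pi i)^{-1}\partial/\partial z_{ij}$; a direct computation gives $D_n(q^T)=(\det T)\,q^T$, hence $D_n(F)=\Theta(F)$ termwise. The operator $D_n$ is not modular but quasi-modular: under the action of $g\in\Gamma_n$ on $F\in M_k(\Gamma_n)$, the quantity $D_n(F)$ transforms with the automorphy factor of weight $k+2n$ plus correction terms involving the matrix $(CZ+D)^{-1}C$ and the lower-order operators $D_j$ with $j<n$. These corrections can be organised, via a Maass--Shimura-style argument, into sums of products $D_j(F)\cdot Q_{n-j}(Z)$, where each $Q_{n-j}$ is a universal quasi-modular form of weight $2(n-j)$ in $Z$; this yields a ``Ramanujan-type identity'' expressing $D_n(F)$ minus these corrections as a genuine modular form of weight $k+2n$.

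Second, I would use the hypothesis $p\ge n+3$ to invoke the Siegel Eisenstein series $E_{p-1}^{(n)}\in M_{p-1}(\Gamma_n)_{\mathbb{Z}_{(p)}}$: the bound $p-1\ge n+2$ is precisely Koecher's range of absolute convergence, and a Siegel analog of von~Staudt--Clausen gives $E_{p-1}^{(n)}\equiv 1\pmod p$. In the same spirit, each quasi-modular $Q_{n-j}$ should admit a genuine holomorphic modular replacement of weight $2(n-j)+(p-1)$ modulo $p$, a Siegel analog of the Kummer congruence $E_{p+1}\equiv E_2\pmod p$. Substituting these congruences into the Ramanujan-type identity and absorbing weight shifts by multiplying through by powers of $E_{p-1}^{(n)}\equiv 1$, one obtains that $\Theta(F)\bmod p$ is the reduction of a holomorphic modular form whose weight can be taken to be any element of $k+2+(p-1)\mathbb{Z}$; choosing $k+p+1$ gives the first assertion. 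The filtration bound $\omega_1(\Theta F)\le k+p+1$ is then immediate from the definition of $\omega_1$, and well-definedness of the induced map on $\widetilde{M}(\Gamma_n)_p$ follows because $F\equiv F'\bmod p$ implies $\Theta(F)\equiv\Theta(F')\bmod p$ termwise.

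The main obstacle is the second step: providing the Siegel analogs of the Kummer-type congruences $Q_{n-j}\equiv (\text{holomorphic modular})\bmod p$ for every $0\le j<n$, not only for $Q_n=E_2$ as in degree one, together with sufficient control on the $p$-integrality of all the intermediate pieces. Both this and the congruence $E_{p-1}^{(n)}\equiv 1\pmod p$ rely on $p\ge n+3$ in an essential way; once they are in hand, the combinatorial bookkeeping that assembles all pieces into a single weight-$(k+p+1)$ representative is routine.
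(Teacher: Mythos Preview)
The paper does not prove this theorem; it is quoted from \cite{Bo-Na} without argument, so there is no in-paper proof to compare your sketch against, only the method of the cited reference.

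Your outline carries the right instinct but the weight bookkeeping is off. The symbol of $D_n=\det(\partial_{ij})$ transforms by $\det(CZ+D)^{2}$, not $\det(CZ+D)^{2n}$: under $Z\mapsto gZ$ one has (at the symbol level) $\partial_Z={}^t(CZ+D)^{-1}\,\partial_W\,(CZ+D)^{-1}$ as symmetric matrices, and $\det(A\,\xi\,{}^tA)=(\det A)^{2}\det\xi$ for a symmetric matrix $\xi$ of commuting symbols. Hence the leading automorphy weight of $\Theta(F)$ is $k+2$, not $k+2n$. If you run your own recipe (each $Q_{n-j}$ of weight $2(n-j)$ replaced by something of weight $2(n-j)+(p-1)$) you end up at weight $k+2n+(p-1)$, which agrees with $k+p+1$ only for $n=1$; so your final claim ``weight in $k+2+(p-1)\mathbb{Z}$'' does not follow from your own setup.

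The deeper gap is the second step. For $n>1$ there is no holomorphic quasi-modular scalar form playing the role of $E_2$: the lower-order terms in the transformation law of $\det(\partial)F$ are genuinely nearly-holomorphic (polynomials in the entries of $(\mathrm{Im}\,Z)^{-1}$), and no ``Kummer congruence'' of the shape $Q_{n-j}\equiv(\text{holomorphic modular})\pmod p$ is available to absorb them. The argument in \cite{Bo-Na} avoids this altogether by working with a bilinear holomorphic Rankin--Cohen-type operator $[F,G]\in M_{k+l+2}(\Gamma_n)$ and taking $G\in M_{p-1}(\Gamma_n)_{\mathbb{Z}_{(p)}}$ with $G\equiv 1\pmod p$ (this is where $p\ge n+3$ is used). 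Modulo $p$ every summand of $[F,G]$ containing a derivative of $G$ vanishes, and what survives is a $p$-unit multiple of $\Theta(F)$; this yields a weight-$(k+p+1)$ representative directly, with no quasi-modular intermediaries.
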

We introduce a relation between mod $p$ singular modular forms and the mod $p$ kernel of $\Theta ^{[j]}$:
\begin{Prop}
\label{kernel3}
Let $p$ be an odd prime and $N$ a positive integer with $p\nmid N$.
For a positive integer $k$, assume that $F\in M_{k}(\Gamma ^{(n)}(N))_{\mathbb{Z}_{(p)}}$ is mod $p$ singular of $p$-rank $r_p$ with $k\not \equiv \frac{r_p}{2}$ mod $2$. Then we have $\Theta ^{[r_p]}(\Phi ^{(n-r_p)}(F))\equiv 0$ mod $p$. Here $\Phi $ is the Siegel $\Phi $-operator and hence $\Phi ^{(n-r_p)}(F)\in M_{k}(\Gamma ^{(r_p)}(N))$.
\end{Prop}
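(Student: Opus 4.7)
The plan is to reduce the statement to the vanishing of specific Fourier coefficients modulo $p$, and then kill them by combining the modular transformation law of $F$ with the parity hypothesis.

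By the standard action of the Siegel $\Phi$-operator on Fourier expansions, $\Phi^{(n-r_p)}(F)$ is the degree-$r_p$ modular form with Fourier coefficients $a_F(\mathrm{diag}(T',0_{n-r_p}))$ for $T' \in \Lambda_{r_p}$. Applying $\Theta^{[r_p]}$ multiplies the $T'$-coefficient by the single $r_p\times r_p$ minor $(T')^{[r_p]} = \det(T')$. Hence the asserted vanishing $\Theta^{[r_p]}(\Phi^{(n-r_p)}(F)) \equiv 0 \bmod p$ is equivalent to
\[
\det(T') \cdot a_F(\mathrm{diag}(T',0_{n-r_p})) \equiv 0 \bmod p \quad \text{for all } T' \in \Lambda_{r_p}.
\]
For $T'$ with $\det T' \equiv 0 \bmod p$ the left side is automatically $0 \bmod p$; the only nontrivial case is $T' \in \Lambda_{r_p}^+$ with $\det T' \not\equiv 0 \bmod p$, in which case the matrix $T_0 := \mathrm{diag}(T',0_{n-r_p})$ has $p$-rank exactly $r_p$---precisely on the boundary of the support allowed by the mod-$p$-singularity hypothesis.

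To force $a_F(T_0) \equiv 0 \bmod p$ in this boundary case, I would exploit $F|_k\gamma = \chi(\det D)F$ for a carefully chosen $\gamma\in\Gamma^{(n)}(N)$ stabilizing $T_0$ under the Fourier-index action $T \mapsto {}^tU T U$. For $\gamma = \mathrm{diag}(U, {}^tU^{-1})$ with $U = \mathrm{diag}(V,W)$, $V\in O(T')\cap\mathrm{GL}_{r_p}(\Z)$ and $W\in\mathrm{GL}_{n-r_p}(\Z)$, one obtains a relation of the form $a_F(T_0) = \det(U)^{k} \cdot \epsilon(\gamma) \cdot a_F(T_0)$, where $\epsilon(\gamma) \in \{\pm 1\}$ collects the sign coming from the half-integral-weight slash operator (the theta factor $j_{1/2}(\gamma,Z)^{-l}$ and the Kronecker character $\bigl(\tfrac{-4}{\det D}\bigr)$ when $k = l/2$). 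The resulting sign is a power of $-1$ whose exponent is a linear combination of $k$ and $r_p/2$; the parity hypothesis $k\not\equiv r_p/2 \bmod 2$ is exactly what makes this exponent odd, giving $a_F(T_0) \equiv -a_F(T_0) \bmod p$ and hence $a_F(T_0) \equiv 0 \bmod p$, since $p$ is odd.

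The main obstacle is producing the sign-giving $\gamma\in\Gamma^{(n)}(N)$ when $N \geq 3$, because then every $U\in\mathrm{GL}_n(\Z)$ with $U \equiv I \bmod N$ automatically satisfies $\det U = +1$, so the integer-weight argument with trivial character alone gives no sign. I would circumvent this either (i) by using the half-integral weight structure, where the Kronecker character $\bigl(\tfrac{-4}{\det D}\bigr)$ supplies a sign under more general symplectic elements even when $\det D$ is not literally $\pm 1$, or (ii) by invoking the block-embedded symplectic action $\mathrm{Sp}_{n-r_p}(\Z)\cap\Gamma^{(n-r_p)}(N)\hookrightarrow \Gamma^{(n)}(N)$ on the lower block, applying a suitably chosen symplectic element there, and tracking the transformation of the Fourier--Jacobi expansion whose zeroth coefficient in $Z_3$ is exactly $\Phi^{(n-r_p)}(F)$. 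Verifying the precise exponent of $-1$ through this more elaborate element is the delicate computation on which everything hinges.
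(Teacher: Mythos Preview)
Your reduction in the first paragraph is correct: the claim is equivalent to showing that $a_F\!\left(\mathrm{diag}(T',0_{n-r_p})\right)\equiv 0\bmod p$ whenever $T'\in\Lambda_{r_p}^+$ has $p\nmid\det T'$. The difficulty is entirely in the second step, and there your proposal has a genuine gap.

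The sign mechanism you are reaching for simply does not exist at level $\Gamma^{(n)}(N)$ with $N\ge 3$. For integer weight $k$ (which is the hypothesis here), the automorphy equation on $\Gamma^{(n)}(N)$ carries the \emph{trivial} character, so no element of the group can produce a factor $-1$ in front of $a_F(T_0)$. Your workaround (i) invokes the half-integral slash operator and the Kronecker symbol, but $k\in\Z_{\ge 1}$ in the proposition, so $j_{1/2}$ and $\bigl(\tfrac{-4}{\cdot}\bigr)$ never enter; there is no theta multiplier to harvest a sign from. Workaround (ii) has the same problem one level down: elements of $\mathrm{Sp}_{n-r_p}(\Z)\cap\Gamma^{(n-r_p)}(N)$ embedded in the lower block again act with trivial character, and for $N\ge 3$ they contain no $-1_{n-r_p}$ or anything else whose automorphy factor on the constant Fourier--Jacobi term is $-1$. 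In short, the parity hypothesis $k\not\equiv r_p/2\bmod 2$ cannot be cashed in through a single group element; it has to enter more indirectly.

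The paper's argument is of a different nature. After reducing (via $\Phi$) to $n=r_p+1$ and passing to $\Gamma_1^{(n)}(N^2)$ by $Z\mapsto NZ$, one takes the Fourier--Jacobi coefficient $\varphi_{M_0}$ at a rank-$r_p$ index $T_0=\mathrm{diag}(0,M_0)$ with $a_F(T_0)\not\equiv 0\bmod p$ and looks at the component $h_0$ of its theta decomposition. Then $h_0$ is an elliptic modular form of weight $k-\tfrac{r_p}{2}$ for $\Gamma_0^{(1)}(NL)$, where $L$ is the level of $M_0$, and the mod~$p$ singularity of $F$ forces $h_0\equiv c\not\equiv 0\bmod p$. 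If $p\nmid L$, the standard fact that a nonzero constant mod~$p$ has weight $\equiv 0\bmod(p-1)$ gives $k-\tfrac{r_p}{2}\equiv 0\bmod(p-1)$, hence $k\equiv\tfrac{r_p}{2}\bmod 2$, contradicting the hypothesis. Thus $p\mid L$, in particular $p\mid\det M_0$, which is exactly the desired vanishing. So the parity condition is used not through a sign in the automorphy factor, but as an obstruction to $k-\tfrac{r_p}{2}$ lying in $(p-1)\Z$.
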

\begin{Rem}
If $F\in M_k(\Gamma ^{(n)}(N))$ is mod $p$ singular of $p$-rank $r_p$, then we have $2k-r_p\equiv 0$ mod $p-1$ by the result of \cite{Bo-Ki}.
Therefore in this case $r_p$ should automatically be even.
\end{Rem}
\begin{proof}
By taking $\Phi $-operator several times, we may suppose that the $p$-rank $r_p$ is $n-1$.
Then it suffices to prove that $\Theta ^{[n-1]}(\Phi (F))\equiv 0$ mod $p$.
Moreover, by taking $F(NZ)\in M_k(\Gamma _1^{(n)}(N^2))$ when $F$ is of $\Gamma ^{(n)}(N)$, it suffices to prove it for $F\in M_{k}(\Gamma _1^{(n)}(N))$.

For any $T_0\in \Lambda _{n}$ with ${\rm rank }(T_0)=n-1$ satisfying $a_F(T_0)\not \equiv 0$ mod $p$, we may assume that $T_0=\left( \begin{smallmatrix}0 & 0 \\ 0 & M_0\end{smallmatrix} \right)$ for some $M_0\in \Lambda _{n-1}^+$.
We shall prove $p\mid \det M_0$ for any such $M_0$.

Recall that $F$ has a Fourier-Jacobi expansion of the form
\[F(Z)=\sum _{M\in \Lambda _{n-1}}\varphi _T(\tau , z)e^{2 \pi i {\rm tr}(M\cdot \tau ')}.\]
Here, we decomposed $\mathbb{H}_{n}\ni Z=\left( \begin{smallmatrix} \tau &  {}^t \frak{z} \\ \frak{z} & \tau ' \end{smallmatrix} \right) $ for $\tau \in \mathbb{H}_1$, $\tau ' \in \mathbb{H}_{n-1}$.
Pick up $M_0$-th Fourier-Jacobi coefficient and consider its theta expansion;
\[ \varphi _{M_0}(\tau ,\frak{z})=\sum _{ \mu } h_\mu (\tau )\Theta _{M_0}[\mu ](\tau,\frak{z}) \]
where $\mu $ runs over all elements of $\mathbb{Z}^{(1,n-1)} \cdot (2M_0) \backslash \mathbb{Z}^{(1,n-1)}$ and
\begin{align*}
h_\mu (\tau )=\sum _{l=0}^\infty a_F\left( \begin{smallmatrix} l & \frac{\mu}{2} \\ \frac{{}^t \mu}{2} & M_0 \end{smallmatrix}\right)e^{2\pi i (l-\frac{1}{4}M_0^{-1}[{}^t \mu ])\tau }.
\end{align*}
From the mod $p$ singularity of $F$, the above $h_{0}$ satisfies that
\[h_0 \equiv c\not \equiv 0 \bmod{p}\]
Moreover $h_{0}$ is a modular form of weight $k-\frac{n-1}{2}\in \mathbb{Z}_{\ge 1}$
for $\Gamma _0^{(1)}(NL)$ by Lemma 5.1 in \cite{Bo-Ki}.
Here $L$ is the level of $M_0$.
If $p\nmid L$ then we have $k-\frac{n-1}{2}\equiv 0$ mod $p-1$ by Katz \cite{Kat}.
However this is impossible because of $k\not \equiv \frac{r_p}{2}$ mod $2$.
Hence $p\mid L$ follows. In particular we have $p\mid \det M_0$.
This completes the proof of Proposition \ref{kernel3}.
\end{proof}

\section{Main results and their proofs}
\subsection{Main results}
\label{Main}
For any $T\in \Lambda _n$, we denote by $\varepsilon (T)$ the content of $T$ defined as
\[\varepsilon (T):={\rm max} \{d \in \mathbb{Z}_{\ge 1} \; | \; d^{-1}T \in \Lambda _n\}. \]
Let $F$ be a scalar valued modular form. If $F\not \equiv 0$ mod $p$ and $\Theta (F)\equiv 0$ mod $p$, then there are three possibilities as follows;

(a) For any $T\in \Lambda _n^+$ we have $a_F(T)\equiv 0$ mod $p$.

(b) For any $T\in \Lambda _n^{+}$ with $a_F(T)\not \equiv 0$ mod $p$, we have $p\mid \varepsilon(T)$.

(c) There exists $T\in \Lambda _n^+$ such that $a_F(T)\not \equiv 0$ mod $p$ and $p\nmid \varepsilon (T)$. \\
A modular form $F$ of the type (a) is called ``mod $p$ singular".
In this case, the authors discussed the possible weight in \cite{Bo-Ki}.
Therefore, the main purpose of this paper is to consider the types (b) and (c).

The first main result concerns $F$ of the type (b), but under a condition on $k$.
Note that the condition (b) is equivalent to $\Theta ^{[1]}(F)$ is mod $p$ singular, i.e. the vector valued modular form $\Theta ^{[1]}(F)$  (mod $p$) satisfies the condition (a) above, because of Proposition \ref{ThetaOp} (3).
\begin{Thm}
\label{Main2}
Let $p$ be a prime with $p\ge 3$ and $N$ a positive integer satisfying that $N\ge 3$ and $p\nmid N$ or $N=1$.
For a positive integer $k$, let $F\in M_k(\Gamma _1^{(n)}(N))_{\mathbb{Z}_{(p)}}$.
Assume that $F\not \equiv c$ mod $p$ for any $c\in \mathbb{Z}_{(p)}$.

(1) If $\Theta ^{[1]}(F)\equiv 0$ mod $p$ and
\begin{align*}
\begin{cases} 0<k<2p-1\ &(k\ \text{odd}),\\
 0<k<3p-1\  &(k\ \text{even})
\end{cases}\
\text{then}\
k=
\begin{cases}
p\ &(k\ \text{odd}), \\
2p\ &(k\ \text{even}).
\end{cases}
\end{align*}

(2) If $\Theta ^{[1]}(F)\equiv 0$ mod $p$, $0<k<p^2-p+1$ and $p\mid k$ then $k=\omega _N(F)$.

(3) If $\Theta ^{[1]}(F)$ is non-trivial mod $p$ singular of $p$-rank $r_p$, then $2k-r_p\equiv 0$ mod $p-1$.
\end{Thm}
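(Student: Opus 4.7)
The plan is to handle the three parts in turn, all via the template used in the proof of Proposition \ref{kernel3}: take a Fourier-Jacobi expansion along a well-chosen block, theta-decompose the relevant coefficient, and reduce the question to an assertion about elliptic modular forms mod $p$, to which one applies Katz's Theorem \ref{Kat} together with Proposition \ref{omega}.

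For parts (1) and (2): The hypothesis $\Theta^{[1]}(F) \equiv 0 \pmod{p}$, combined with Proposition \ref{ThetaOp}(1), is equivalent to the support of $\widetilde F$ on $\Lambda_n$ being contained in $p\Lambda_n$. The central step, which is the Siegel analog of Katz's theorem, is to establish $p \mid \omega_N(F)$. I would prove this by expanding $F = \sum_M \varphi_M(\tau, \mathfrak{z})\, e^{2\pi i\, \mathrm{tr}(M\tau')}$ as a Fourier-Jacobi series along the last row/column, theta-decomposing $\varphi_M = \sum_\mu h_{M,\mu}(\tau)\, \vartheta_{M,\mu}(\tau, \mathfrak{z})$, and noting that the support condition forces $\theta h_{M,\mu} \equiv 0 \pmod{p}$ (with $\theta$ the Ramanujan operator in $\tau$); Katz's Theorem \ref{Kat} then yields $p \mid \omega(h_{M,\mu})$ for each $(M,\mu)$, and a tracking of weights through the theta decomposition propagates this back to $p \mid \omega_N(F)$. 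Once $p \mid \omega_N(F)$ is in hand, the congruence $\omega_N(F) \equiv k \pmod{p-1}$ from Proposition \ref{omega}, together with $0 < \omega_N(F) \le k$, pins down the filtration: for part (1), with $k$ odd and $k < 2p - 1$, the unique positive multiple of $p$ at most $k$ is $\omega_N(F) = p$, which forces $k \equiv p \pmod{p-1}$ and, using $k$ odd with $k < 2p - 1$, $k = p$; the even case is analogous, giving $\omega_N(F) = 2p$ and $k = 2p$. For part (2), the hypothesis $p \mid k$ together with $\gcd(p, p-1) = 1$ upgrades the congruence to $\omega_N(F) \equiv k \pmod{p(p-1)}$, and the bound $k < p^2 - p + 1 = p(p-1) + 1$ forces $\omega_N(F) = k$.

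For part (3): Here $\Theta^{[1]}(F)$ is only mod $p$ singular, so there exists $T_0 \in \Lambda_n$ of rank $r_p$ with $T_0\, a_F(T_0) \not\equiv 0 \pmod{p}$, and necessarily $T_0$ is not positive definite. Following Proposition \ref{kernel3}'s proof, I would base-change in $\Gamma_n$ to arrange $T_0 = \left(\begin{smallmatrix} 0 & 0 \\ 0 & M_0 \end{smallmatrix}\right)$ with $M_0 \in \Lambda_{r_p}^+$ and $M_0 \not\equiv 0 \pmod{p}$, reduce to $n - r_p = 1$ via the $\Phi$-operator, and theta-decompose the Fourier-Jacobi coefficient $\varphi_{M_0}$. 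The mod $p$ singularity of $\Theta^{[1]}(F)$ combined with $M_0 \not\equiv 0 \pmod{p}$ then forces each $h_\mu$ to reduce to a constant mod $p$, with $h_0 \equiv a_F(T_0) \not\equiv 0 \pmod{p}$. Since $h_0$ is a nonzero constant mod $p$ of (possibly half-integral) weight $k - r_p/2$, Proposition \ref{omega} (or its half-integral weight analog) gives $0 = \omega(h_0) \equiv k - r_p/2 \pmod{p-1}$, which upon doubling yields $2k - r_p \equiv 0 \pmod{p-1}$.

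The main obstacle is the Siegel analog of Katz needed in parts (1)-(2): one must carefully transfer the Frobenius-type support condition through the theta decomposition in (possibly half-integral) weight and keep track of the resulting weight shifts. Once this step is cleanly handled, the remaining arguments reduce to elementary size and parity bookkeeping via Proposition \ref{omega}.
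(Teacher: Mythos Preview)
Your outline for part (3) is essentially the paper's proof of Theorem~\ref{vec-val}, so that part is fine. The problem lies in parts (1) and (2), where your Fourier--Jacobi/theta decomposition route does not go through as stated, for two related reasons.

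First, the level. Under the hypothesis $\Theta^{[1]}(F)\equiv 0\pmod p$, every $T$ with $a_F(T)\not\equiv 0\pmod p$ lies in $p\Lambda_n$; in particular the only Jacobi coefficients $\varphi_M$ which survive mod $p$ are those with $M\in p\Lambda_{n-1}$. But then the level of $M$ is divisible by $p$, hence so is the level of each $h_{M,\mu}$, and Katz's Theorem~\ref{Kat} (which explicitly requires $p\nmid N$) is no longer available. This is precisely the obstruction that the contradiction argument in Proposition~\ref{kernel3} exploits in the opposite direction; here you cannot sidestep it.

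Second, the weight shift and the ``propagation back''. The $h_{M,\mu}$ have weight $k-\tfrac{n-1}{2}$, not $k$, and you give no mechanism for passing from $p\mid \omega(h_{M,\mu})$ (for all $M,\mu$) to $p\mid \omega_N(F)$. Knowing the filtrations of theta components does not reconstruct a level-$N$ Siegel form of small weight congruent to $F$; this step is genuinely missing, not just technical.

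The paper avoids both difficulties by a different reduction: it uses the \emph{integral extract} of \cite{Bo-Na3}. Fixing $T_0\neq 0$ with $a_F(T_0)\not\equiv 0\pmod p$ (hence all diagonal entries $d_i$ of $T_0$ divisible by $p$), one forms
\[
f(\tau)=\sum_{l\ge 0} a(l)q^{l},\qquad a(l)=\sum_{\substack{T\equiv T_0\ (\mathrm{mod}\ M)\\ \mathrm{diag}(T)=(l,d_2,\dots,d_n)}} a_F(T),
\]
for an auxiliary modulus $M$ coprime to $p\det T_0$. Then $f\in M_k(\Gamma_1^{(1)}(NM^2))$ has the \emph{same} weight $k$ and level prime to $p$, inherits $\theta f\equiv 0\pmod p$ from the support condition, and is non-constant mod $p$. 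Now Katz applies cleanly to give $p\mid \omega_{NM^2}(f)$, and since $\omega_{NM^2}(f)\equiv k\pmod{p-1}$ and $0<\omega_{NM^2}(f)\le \omega_N(F)\le k$, the size and parity bookkeeping you already described finishes parts (1) and (2). So your endgame is right; what needs replacing is the passage to degree one.
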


\begin{Rem}
(1) A modular form $F$ satisfying $\Theta ^{[1]}(F)\equiv 0$ mod $p$ is called ``totally $p$-singular'' by Weissauer \cite{We}.
Weissauer \cite{We} also obtained the similar statements (at least for the case of level $1$)
under a certain condition on the largeness of $p$, in geometrical terminology.
Our statement is phrased in classical (elementary) language. \\
(2) There exists a mod $p$ singular modular form $F$ ($\not \equiv 0$ mod $p$) such that $\Theta ^{[1]}(F)\equiv 0$ mod $p$.
In fact, we can construct such an example in the following way:
For any mod $p$ singular modular form $F\in M_k(\Gamma _n)_{\mathbb{Z}_{(p)}}$, we consider
\[G:=\sum _{T\in \Lambda _n}a_F(pT)q^{pT}\in M_k(\Gamma_0^{(n)}(p^2)).\]
Applying Theorem \ref{p-adic}, we can take $H\in M_{k'}(\Gamma _n)_{\mathbb{Z}_{(p)}}$ such that $H\equiv G$ mod $p$.
Then $H$ is a mod $p$ singular modular form such that $\Theta ^{[1]}(H)\equiv 0$ mod $p$.
For the existence of mod $p$ singular modular forms and for their possible weights, see \cite{Bo-Ki}.
\end{Rem}
The statement (3) in this theorem follows immediately from a property on mod $p$ singular vector valued Siegel modular forms,
which is a generalization of the result in \cite{Bo-Ki}:
\begin{Thm}
\label{vec-val}
Let $p$ be an odd prime and $k$ a positive integer.  Let $N$ be a positive integer with $p\nmid N$ and $F\in M_k(\Gamma_1^{(n)}(N))$.
Suppose that $\Theta^{[j]}(F)$ is mod $p$ singular
of $p$-rank $r_p$.
Then
$$2k\equiv r_p \bmod p-1$$
holds.
\end{Thm}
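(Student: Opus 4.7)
The plan is to adapt the Fourier--Jacobi argument of \cite{Bo-Ki} (used in the proof of Proposition~\ref{kernel3}) to the vector-valued situation here. The key block-structure observation is elementary: if $T = \left(\smat{l}{0}{0}{M}\right)$ with $M$ of size $n-1$ (and any $l\ge 0$), a direct computation of $j\times j$ minors shows that $T^{[j]}$ is block-diagonal with blocks $l\cdot M^{[j-1]}$ and $M^{[j]}$; in particular $T^{[j]}$ always contains $M^{[j]}$ as a block and is therefore nonzero mod $p$ whenever $M^{[j]}\not\equiv 0\pmod p$, independently of $l$.

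The proof then proceeds by induction on the degree $n$. For the inductive step, assume $r_p<n-1$. For any $M\in\Lambda_{n-1}^+$, the matrix $T=\left(\smat{0}{0}{0}{M}\right)$ has rank $n-1>r_p$, so $T^{[j]}a_F(T)\equiv 0\pmod p$, whence by the block decomposition $M^{[j]}a_{\Phi F}(M)\equiv 0\pmod p$. Thus $\Theta^{[j]}(\Phi F)$ is mod $p$ singular. By choosing a witness $T_0$ of rank $r_p$ for $\Theta^{[j]}(F)$ whose first $n-r_p$ rows and columns vanish (possible after a $\mathrm{GL}_n(\mathbb{Z})$-change of variable, which one accommodates by first passing to a principal congruence subgroup as in the proof of Proposition~\ref{kernel3}), the $p$-rank of $\Theta^{[j]}(\Phi F)$ is exactly $r_p$, and the induction hypothesis applied to $\Phi F$ gives $2k\equiv r_p\pmod{p-1}$.

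For the base case $r_p=n-1$, pick $T_0=\left(\smat{0}{0}{0}{M_0}\right)$ of rank $n-1$ with $T_0^{[j]}a_F(T_0)\not\equiv 0\pmod p$; this forces both $a_F(T_0)\not\equiv 0\pmod p$ and $M_0^{[j]}\not\equiv 0\pmod p$. Form the Fourier--Jacobi expansion of $F$ at the last variable, pick up the $M_0$-th coefficient $\varphi_{M_0}$, and consider its theta decomposition $\varphi_{M_0}(\tau,\mathfrak{z})=\sum_\mu h_\mu(\tau)\,\Theta_{M_0}[\mu](\tau,\mathfrak{z})$. By Lemma~5.1 of \cite{Bo-Ki}, $h_0$ is a modular form of weight $k-\tfrac{n-1}{2}$ for $\Gamma_0^{(1)}(NL)$ (where $L$ is the level of $M_0$), with $q$-expansion $h_0(\tau)=\sum_{l\ge 0}a_F(T_l)q^l$ for $T_l=\left(\smat{l}{0}{0}{M_0}\right)$. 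For $l\ge 1$, $T_l\in\Lambda_n^+$ gives $T_l^{[j]}a_F(T_l)\equiv 0\pmod p$; the block observation ensures that $T_l^{[j]}$ still contains the nonzero $M_0^{[j]}$ as a block mod $p$, so the scalar $a_F(T_l)\equiv 0\pmod p$. Therefore $h_0\equiv a_F(T_0)\not\equiv 0\pmod p$ is a nonzero constant modulo $p$.

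The weight congruence for a nonzero constant mod $p$ elliptic modular form (Proposition~\ref{omega} in degree one, together with its half-integral-weight analogue when $n$ is even) then gives $k-\tfrac{n-1}{2}\equiv 0\pmod{p-1}$, equivalently $2k\equiv n-1=r_p\pmod{p-1}$; if $p\mid L$, one first lifts $h_0$ to a $p$-adic modular form of level prime to $p$ (using Theorem~\ref{p-adic} in degree one, or Serre's classical precursor) and then applies the congruence. The main obstacle I anticipate is the careful handling of the case $p\mid L$, where the level must be descended via $p$-adic methods before Proposition~\ref{omega} becomes applicable, together with the half-integral-weight bookkeeping when $n$ is even; both are standard but require explicit citations to the literature.
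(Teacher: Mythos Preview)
Your proposal is correct and follows essentially the same route as the paper: reduce to $r_p=n-1$ via the Siegel $\Phi$-operator, pick a witness $T_0=\left(\smat{0}{0}{0}{M_0}\right)$, use the Fourier--Jacobi/theta expansion to see that $h_0$ is a nonzero constant mod $p$, and read off the weight congruence as in \cite{Bo-Ki}. The only cosmetic difference is that the paper singles out a fixed $j\times j$ minor $(a^o,b^o)$ of $M_0$ (which survives in $T^{[j]}$ for \emph{every} $T$ with lower-right block $M_0$, not just the block-diagonal ones), whereas you use the full block-diagonal shape of $T_l^{[j]}$ for $\mu=0$; since only $h_0$ is needed, both arguments land in the same place.
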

\begin{Rem}
We may allow the modular group to be of type
$\Gamma_1^{(n)}(N)\cap \Gamma_0^{(n)}(p^l)$ with $N$ coprime to $p$.
We may also allow quadratic nebentypus modulo $p$.
\end{Rem}
The second main result concerns $F$ of the type (c).
We remark that $p\nmid \varepsilon (T)$ for $T\in \Lambda _n$ is equivalent to the existence of $j$ with $1\le j\le n-1$ such that $p\nmid T^{[j]}$,
where we write $p\mid T^{[j]}$ if $p$ divides all entries of $T^{[j]}$, otherwise we write $p\nmid T^{[j]}$.
Moreover the existence of $j$ and $T\in \Lambda _n^+$ with $p\nmid T^{[j]}$ such that $a_F(T)\not \equiv 0$ mod $p$, implies $\Theta ^{[j]}(F)\not \equiv 0$ mod $p$.
Namely we have
\begin{align*}
&\quad \exists T\in \Lambda _n^+\ \text{s.t.}\ a_F(T)\not \equiv 0 \bmod{p}, \ p\nmid \varepsilon(T) \\
\Longleftrightarrow &\quad \exists j\ (1\le j\le n-1),\ \exists T\in \Lambda _n^+\ \text{s.t.}\ a_F(T)\not \equiv 0 \bmod{p}, \ p\nmid T^{[j]}\\
\Longrightarrow &\quad \exists j\ (1\le j\le n-1)\ \text{s.t.}\ \Theta ^{[j]}(F)\not \equiv 0 \bmod{p}.
\end{align*}
Note also that the converse of the last right arrow is not assured in general.

For any $F$ of the type (c), we can find $j$ such that $\Theta ^{[j]}(F)\not \equiv 0$ mod $p$ and $\Theta ^{[j+1]}(F) \equiv 0$ mod $p$.
Then we have the following statement:
\begin{Thm}
\label{Main1}
Let $p$ be a prime with $p\ge 3$ and $N$ a positive integer with $p\nmid N$. Let $n$, $j$ and $k$ be positive integers such that $j<n$. Assume that $F\in M_k(\Gamma _{1}^{(n)}(N))_{\mathbb{Z}_{(p)}}$ satisfies $\Theta ^{[j+1]}(F)\equiv 0$ mod $p$ and $\Theta ^{[j]}(F)\not \equiv 0$ mod $p$.

(1) If
\begin{align*}
&\begin{cases}
k<p+(j-1)/2\ &(j\ \text{odd}),\\
k<2p+(j-2)/2\ &(j\ \text{even},\ k-j/2\ \text{odd}),\\
k<3p+(j-2)/2\ &(j\ \text{even},\ k-j/2\ \text{even})
\end{cases}\quad \text{then}\\
&\begin{cases}
2k-j=p\ &(j\ \text{odd}),\\
k-j/2=p\ &(j\ \text{even},\ k-j/2\ \text{odd}),\\
k-j/2\equiv 0 \bmod{p-1} \quad \text{or}\quad k-j/2=2p\ &(j\ \text{even},\ k-j/2\ \text{even}).
\end{cases}
\end{align*}

(2) If
\[
\begin{cases}
2k-j<p^2-p+1 \ &(j\ \text{odd}), \\
k-j/2<p^2-p  \ &(j\ \text{even})
\end{cases}
\quad \text{and} \quad p\mid (2k-j)
\quad \text{then} \quad k=\omega _N (F).\]
\end{Thm}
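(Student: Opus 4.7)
The plan is to apply Theorem \ref{vec-val} to $\Theta^{[j]}(F)$ to extract a congruence modulo $p-1$, and then use parity and smallness to pin down the weight. The subtlety is that Theorem \ref{vec-val} requires $\Theta^{[j]}(F)$ itself to be mod $p$ singular, and this is not an automatic consequence of $\Theta^{[j+1]}(F) \equiv 0 \bmod p$: the hypothesis only forces every positive definite $T$ with $a_F(T) \not\equiv 0 \bmod p$ to satisfy $p \mid T^{[j+1]}$, and such a $T$ can still have $p \nmid T^{[j]}$, in which case $T^{[j]} a_F(T) \not\equiv 0 \bmod p$ is a non-zero positive definite Fourier coefficient of $\Theta^{[j]}(F)$.

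Accordingly, I would split the proof of part (1) into two cases. In Case A, every positive definite $T$ with $a_F(T) \not\equiv 0 \bmod p$ already satisfies $p \mid T^{[j]}$; then $\Theta^{[j]}(F)$ is mod $p$ singular of some $p$-rank $r_p$ with $j \le r_p \le n-1$ (the lower bound coming from $\Theta^{[j]}(F) \not\equiv 0$), and Theorem \ref{vec-val} directly yields $2k \equiv r_p \bmod p-1$. In Case B, there is a positive definite $T_0$ with $p \nmid T_0^{[j]}$, $p \mid T_0^{[j+1]}$, and $a_F(T_0) \not\equiv 0 \bmod p$. I would handle this case by adapting the Fourier--Jacobi expansion argument from the proof of Proposition \ref{kernel3}: after lifting a suitable $\mathrm{GL}_n(\mathbb{F}_p)$ change of basis to $\mathrm{GL}_n(\mathbb{Z})$ one may arrange $T_0 \equiv \mathrm{diag}(0_{n-j}, M_0) \bmod p$ with $M_0$ of full rank mod $p$, extract the relevant theta-coefficient in the Fourier--Jacobi expansion at the $M_0$-slice, and obtain a (possibly half-integral weight) modular form of weight $k - j/2$ in $n-j$ variables whose theta operator is forced to vanish mod $p$. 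Katz's Theorem \ref{Kat} (or an induction via Theorem \ref{Main2}) then yields the required congruence on $k$.

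With the congruence $2k \equiv r_p \bmod p - 1$ in hand for some $r_p$ with $j \le r_p \le n-1$, part (1) reduces to a parity-and-size exercise. For $j$ odd, $2k - j$ is odd and cannot equal a non-zero even multiple of $p-1$; the smallness bound $k < p + (j-1)/2$ then forces $2k - j = p$. The two even sub-cases are handled analogously, with the three alternatives in the statement corresponding to the possible residues of $r_p$ modulo $p-1$ combined with the parity of $k - j/2$. For part (2), Proposition \ref{omega} yields $\omega_N(F) \equiv k \bmod p-1$ and $\omega_N(F) \le k$; the divisibility $p \mid (2k - j)$ together with the stronger smallness bound $2k - j < p^2 - p + 1$ (respectively $k - j/2 < p^2 - p$) rules out every candidate weight in the allowed range that is both $< k$, congruent to $k$ modulo $p-1$, and compatible with the list from part (1), forcing $k = \omega_N(F)$.

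The main obstacle is Case B: the Fourier--Jacobi reduction must be set up carefully so that the weight shift $k \mapsto k - j/2$ (which is half-integral when $j$ is odd) is compatible with the subsequent application of Katz's theorem, and so that the extracted lower-degree form genuinely lies in the mod $p$ kernel of the relevant theta operator. The parity and smallness enumeration in part (1), though elementary, also requires disciplined case-splitting to cover the three alternatives cleanly.
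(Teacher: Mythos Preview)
Your Case A/Case B split is where the argument goes off the rails. In Case A you apply Theorem \ref{vec-val} to the mod $p$ singular form $\Theta^{[j]}(F)$ and obtain $2k\equiv r_p\bmod (p-1)$ for its $p$-rank $r_p$, which you only know lies in $[j,n-1]$. But the conclusion you need is about $2k-j$, not $2k-r_p$; for $r_p>j$ the parity-and-size bookkeeping you sketch does not force $2k-j=p$ (try $j=1$, $n$ large, $r_p=5$). So Case A, as you outline it, does not close.

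The paper avoids this entirely by observing that the Case A/B dichotomy is unnecessary: the Fourier--Jacobi argument you reserve for Case B already covers everything. The hypothesis $\Theta^{[j]}(F)\not\equiv 0\bmod p$ gives a $T\in\Lambda_n$ (not necessarily positive definite) with $a_F(T)\not\equiv 0$ and a $j\times j$ principal submatrix $M_0\in\Lambda_j^+$ satisfying $p\nmid\det M_0$. One then takes the honest Fourier--Jacobi expansion at $M_0$ (no ``mod $p$'' diagonalization of $T$ is needed; the off-block pieces of $T$ become the $L$ and $\mu$ in the theta expansion) and extracts $H_\mu\in M_{k-j/2}(\Gamma_1^{(n-j)}(4^2N^2Q^2))$. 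The point you gloss over is the key computation (the paper's Lemma \ref{lem2}): a Laplace-type identity shows that every $(j{+}1)\times(j{+}1)$ minor of $S=\left(\begin{smallmatrix}L & \mu/2\\ {}^t\mu/2 & M_0\end{smallmatrix}\right)$ containing the full $M_0$-block equals an entry of $L-\tfrac14 M_0^{-1}[{}^t\mu]$ times $\det M_0$, so $\Theta^{[j+1]}(F)\equiv 0$ forces $\Theta^{[1]}(H_\mu)\equiv 0\bmod p$. From here one applies Theorem \ref{Main2} (not Theorem \ref{vec-val}) to $H_\mu$; the half-integral obstacle you flag when $j$ is odd is dispatched by squaring, i.e.\ applying Theorem \ref{Main2} to $H_\mu^2$, of integral odd weight $2k-j$. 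Part (2) is then not a pure enumeration but another direct application of Theorem \ref{Main2}(2) to $H_\mu$ (or $H_\mu^2$), using the chain $\omega_{4^2N^2Q^2}(H_\mu)+j/2\le\omega_N(F)\le k$.
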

In a more general situation, we predict that
\begin{Conj}
\label{Conj}
Let $p$ be a prime and $n$, $j$ and $k$ be positive integers with $j<n$. Let ``$k$ be sufficiently small compared with $p$''.
Assume that $F\in M_k(\Gamma ^{(n)}_{1}(N))_{\mathbb{Z}_{(p)}}$ ($k\in \mathbb{Z}_{\ge 1}$) satisfies $\Theta ^{[j+1]}(F)\equiv 0$ mod $p$ and $\Theta ^{[j]}(F)\not \equiv 0$ mod $p$. Then we have
\[p\mid (2\omega _N(F)-j)\]
\end{Conj}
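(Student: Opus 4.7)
My plan is to attack the conjecture by extending the method of Theorem~\ref{Main1} through a Serre--Katz style filtration argument, removing the smallness restriction on $k$. Since $\omega_N(F)$ depends only on $\widetilde F\bmod p$, I may replace $F$ by a representative of minimal weight in its mod $p$ class. Thus, without loss of generality, $k=\omega_N(F)$, and the goal becomes $p\mid 2k-j$.

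By Proposition~\ref{ThetaOp}, the hypotheses force every $T\in\Lambda_n$ with $a_F(T)\not\equiv 0\bmod p$ to have $p$-rank at most $j$, while at least one witness $T$ attains $p$-rank exactly $j$. If such a witness can be chosen semidefinite but not positive definite, one can apply a $\Phi$-operator to descend the degree and invoke Proposition~\ref{kernel3} together with Theorem~\ref{vec-val}; but Theorem~\ref{vec-val} yields only $2k\equiv j\bmod p-1$, so the real substance of the conjecture lies in sharpening this congruence to modulus $p$.

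The central strategy for this sharpening is to establish a Serre-type filtration identity
\[
  \omega_N\bigl(\Theta^{[j+1]}(F)\bigr)\;=\;\omega_N(F)+\mu(p,j)
\]
valid whenever $p\nmid 2\omega_N(F)-j$, with a strict drop in the divisible case, for an explicit positive correction $\mu(p,j)$ specializing to Serre's $p+1$ in the scalar case $n=1$, $j=0$. Combined with manipulations involving the Eisenstein series $E_{p-1}\equiv 1\bmod p$ (to move between weights without altering the mod $p$ class) and a product rule for $\Theta^{[j]}$, the vanishing $\Theta^{[j+1]}(F)\equiv 0\bmod p$ would rule out the generic case of the identity and force $p\mid 2k-j$.

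The main obstacle is proving the filtration identity. Even for the full scalar operator $\Theta=\Theta^{[n]}$ only the inequality of Theorem~\ref{TOP2} is known; for vector-valued partial operators $\Theta^{[j]}$ the tensor structure of the target compounds the difficulty. A plausible fallback is to extend the Fourier--Jacobi reduction of Proposition~\ref{kernel3}: given a positive definite $T_0$ of $p$-rank exactly $j$ with $a_F(T_0)\not\equiv 0$, decompose $F$ along a block structure reflecting this $p$-rank, extract a one-variable modular form whose theta vanishes mod $p$, and apply Katz's Theorem~\ref{Kat} to obtain divisibility by $p$. Ensuring that the extracted level is coprime to $p$ --- the step at which the proof of Proposition~\ref{kernel3} had to branch --- is the sharpest difficulty, and is precisely what the qualifier ``$k$ sufficiently small compared with $p$'' in the conjecture is designed to accommodate.
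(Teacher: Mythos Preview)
The statement you are attempting to prove is labeled \emph{Conjecture} in the paper and is not proved there; the paper offers Theorem~\ref{Main1} only as supporting evidence under explicit smallness bounds on $k$, and in Remark~\ref{b-c}(1) together with the tables of Subsections~\ref{Ex:2} and~\ref{sec:example-filtr-case} it exhibits explicit counterexamples when the filtration is large relative to $p$ (for instance a degree-$2$, level-$1$ form $F$ with $\omega_1(F)=24$, $\Theta^{[2]}(F)\equiv 0$ and $\Theta^{[1]}(F)\not\equiv 0\bmod 5$, yet $2\cdot 24-1\equiv 2\not\equiv 0\bmod 5$). There is therefore no ``paper's own proof'' to compare against, and your opening move of ``removing the smallness restriction on $k$'' is doomed from the start: the unrestricted statement is false.

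Your proposal is in any case a strategy sketch with openly acknowledged gaps, not a proof. The central filtration identity you posit, $\omega_N(\Theta^{[j+1]}(F))=\omega_N(F)+\mu(p,j)$ with a strict drop exactly when $p\mid 2\omega_N(F)-j$, would be the entire content of the conjecture; asserting it is equivalent to assuming what you want to prove. Even for the scalar operator $\Theta=\Theta^{[n]}$ only the upper bound of Theorem~\ref{TOP2} is available in the paper, and for $j+1<n$ the target of $\Theta^{[j+1]}$ is genuinely vector valued, so the filtration $\omega_N$ as defined here does not even apply to it. Your fallback route---Fourier--Jacobi reduction to a lower-degree form and an appeal to Katz's Theorem~\ref{Kat}---is precisely the mechanism the paper uses to prove the partial result Theorem~\ref{Main1}, and you have correctly identified that guaranteeing the extracted form has level prime to $p$ and is not congruent to a constant is exactly where a smallness hypothesis on $k$ is consumed. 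You supply no new idea for getting past that point, so the proposal does not advance beyond what the paper already establishes, and the conjecture remains open.
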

Therefore, we can regard Theorem \ref{Main1} as an example which supports this conjecture.
\begin{Rem}
\label{b-c}
(1) If the weight is large compared with $p$, the statement of this conjecture is not true.
We will show this for the case of degree $2$ and level $1$, by numerical examples in Subsections \ref{Ex:2} and
\ref{sec:example-filtr-case}. \\
(2) Yamauchi \cite{Ya} concluded the similar statements as in the two theorems above
for the case of degree $2$, without condition on the smallness of $k$ compared with $p$,
but under a certain geometrical non-vanishing condition. The proof is also algebraic geometrical.
\end{Rem}

We summarize the simplest case $n = 2$ in Theorems \ref{Main2} and \ref{Main1} as follows:
\begin{Cor}
Let $p$ be a prime with $p\ge 3$ and $N$ a positive integer with $p\nmid N$. For a positive integer $k$, assume that $F\in M_k(\Gamma _{1}^{(2)}(N))_{\mathbb{Z}_{(p)}}$ satisfies $\Theta (F)\equiv 0$ mod $p$.

(1) If $k<p$ and there exists $T\in \Lambda _2^+$ with $p\nmid \varepsilon (T)$ satisfying $a_F(T)\not \equiv 0$ mod $p$, then we have $2k-1=p$.

(2) If we have $p\mid \varepsilon (T)$ for any $T\in \Lambda _2^+$ with $a_F(T)\not \equiv 0$ mod $p$, then $\Theta ^{[1]}(F)\equiv 0$ mod $p$. In particular, if \begin{align*}
\begin{cases} 0<k<2p-1\ &(k\ \text{odd}),\\
 0<k<3p-1\  &(k\ \text{even})
\end{cases}\
\text{then}\
k=
\begin{cases}
p\ &(k\ \text{odd}), \\
2p\ &(k\ \text{even}).
\end{cases}
\end{align*}
\end{Cor}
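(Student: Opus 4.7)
The plan is to deduce the Corollary as the specialization $n=2$ (so that $\Theta=\Theta^{[2]}$) of Theorems \ref{Main1} and \ref{Main2}. For $n=2$, only the values $j\in\{1,2\}$ matter, so the trichotomy (a)/(b)/(c) preceding Theorem \ref{Main2} is exhaustive and the hypothesis $\Theta(F)\equiv 0\bmod p$ forces $F$ into type (a) (handled in \cite{Bo-Ki}), type (b) (which is part (2) of the Corollary), or type (c) (which is part (1)).

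For part (1), I would first translate $p\nmid \varepsilon(T)$ into matrix language: writing $T=\left(\smat{a}{b/2}{b/2}{c}\right)\in\Lambda_2$ one has $\varepsilon(T)=\gcd(a,b,c)$, so since $p$ is odd the condition $p\nmid\varepsilon(T)$ is equivalent to $T\not\equiv 0\bmod p$ as a matrix. For such a $T$ with $a_F(T)\not\equiv 0\bmod p$, the $q^T$ coefficient $T\cdot a_F(T)$ of $\Theta^{[1]}(F)$ is a nonzero symmetric matrix mod $p$, so $\Theta^{[1]}(F)\not\equiv 0\bmod p$. Combined with $\Theta^{[2]}(F)=\Theta(F)\equiv 0\bmod p$ and the hypothesis $k<p$, this is exactly the setting of Theorem \ref{Main1} with $(n,j)=(2,1)$; since $j=1$ is odd and $k<p=p+(j-1)/2$, the first branch of that theorem yields $2k-j=p$, i.e.\ $2k-1=p$.

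For part (2), the hypothesis says $T\equiv 0\bmod p$ as a matrix whenever $T\in\Lambda_2^+$ satisfies $a_F(T)\not\equiv 0\bmod p$; hence $T^{[1]}\cdot a_F(T)=T\cdot a_F(T)\equiv 0\bmod p$ for every $T\in\Lambda_2^+$. By Proposition \ref{ThetaOp}(3) this is precisely the statement that $\Theta^{[1]}(F)$ is mod $p$ singular, which is the input required by the hypothesis ``$\Theta^{[1]}(F)\equiv 0\bmod p$'' in Theorem \ref{Main2}(1). The ``in particular'' conclusion is then Theorem \ref{Main2}(1) applied verbatim, with the two weight bounds and the two target values of $k$ matching on the nose according to the parity of $k$.

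The one point that deserves care is the reading of the symbol ``$\Theta^{[1]}(F)\equiv 0\bmod p$'': the content hypothesis in part (2) provides only the vanishing of the positive definite matrix coefficients of $\Theta^{[1]}(F)$, which is exactly mod $p$ singularity and is the substantive input used in Theorem \ref{Main2}(1); literal coefficientwise vanishing at rank $\le 1$ indices is not supplied by the hypothesis and would need to be obtained separately via the Siegel $\Phi$-operator together with Theorem \ref{Kat}. Modulo this interpretive remark, the Corollary is pure bookkeeping on the $(n,j)=(2,1)$ specialization of the two main theorems, and the substantive work lives inside the proofs of Theorems \ref{Main1} and \ref{Main2} themselves.
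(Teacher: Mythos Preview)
Your approach matches the paper's exactly: the Corollary is presented there without proof, merely as the $n=2$ summary of Theorems \ref{Main2} and \ref{Main1}, and you have carried out that specialization correctly---part (1) via Theorem \ref{Main1} with $(n,j)=(2,1)$, and the ``In particular'' of part (2) via Theorem \ref{Main2}(1). Your caveat in the final paragraph is apt and worth recording: the paper itself notes (just before Theorem \ref{Main2}) that condition (b) is equivalent only to $\Theta^{[1]}(F)$ being mod $p$ singular, so the literal assertion $\Theta^{[1]}(F)\equiv 0\bmod p$ in the Corollary is a slight overstatement at rank~$1$ indices; nonetheless the weight conclusion is unaffected, since the proof of Theorem \ref{Main2}(1) uses only the existence of a single nonzero $T_0$ with $p\mid\varepsilon(T_0)$ and $a_F(T_0)\not\equiv 0\bmod p$, which the hypothesis of (2) supplies directly.
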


\subsection{Proof of Theorem \ref{vec-val}}
Since Theorem \ref{Main2} (3) follows from Theorem \ref{vec-val}, we start with proving Theorem \ref{vec-val}.

We observe that the Fourier expansion of $\Theta^{[j]}(F)$
runs only over elements of $\Lambda_n$ with ${\rm rank}(T)\geq j$, therefore
only the case $j\leq r_p$ is of interest for us.
Also it may be convenient to reduce the claim to the case $r_p=n-1$
by
applying the Siegel $\Phi$-operator
several times; for details on the Siegel $\Phi$-operator in the vector-valued
case we refer to \cite{Frei,We2}. We just mention that for $j<n$ we may identify
$$\Phi \left(\sum_{T\in \Lambda_n}
T^{[j]}\cdot a_F(T)q^T\right)$$
with
$$\sum_{S\in \Lambda_{n-1}}
S^{[j]}\cdot a_F \left(\begin{array}{cc} 0 & 0\\0 & S\end{array}\right) q^S. $$

We introduce some useful notation following \cite{Frei}: For a $n$-rowed matrix $M$ and two subsets $P$, $Q$ of $\{1,\ldots ,n\}$
with $t$ elements we denote by $M^{[t]}_{(P,Q)}$ the determinant of the $t$-rowed matrix $M^P_Q$ which we obtain from $M$ by deleting
all rows which do not belong to $P$ and all columns which do not belong to $Q$.

Now start from the Fourier expansion $F=\sum a_F(T)q^T$;
there exists $T_0\in \Lambda_n$ with rank $n-1$ such that
$T_0^{[j]}\cdot a_F(T_0)$ is not congruent zero modulo $p$.
We may assume that $T_0$ is of the form
$$T_0= \left(\begin{array}{cc} 0 & 0\\
0 & M_0\end{array}\right)\qquad (M_0\in \Lambda^+_{n-1}). $$
The property of $T_0$ from above implies that there is at least
one entry of the matrix $T_0^{[j]}$ which is not congruent zero modulo $p$, i.e.
there exist subsets $a^o$, $b^o$ of $\{1,\dots, n\}$ with $\det({T_0}^{a^o}_{b^o})
\not \equiv 0$ mod $p$.
From the special shape of $T_0$ it follows that both $a^o$ and $b^o$
are subsets of $\{2,\dots ,n\}$, i.e. the $(a^o,b^o)$-entry of $T_0^{[j]}$
is a determinant of a submatrix of $M_0$, which we call $d_0$.

We decompose $Z\in {\mathbb H}_n$ as
$Z=\left(\begin{smallmatrix} \tau & {\mathfrak z}\\ {\mathfrak z}^t & \tau'\end{smallmatrix}\right)$ with $\tau'\in {\mathbb H}_{n-1}$ and
study the
Fourier Jacobi coefficient
$$\varphi_{M_0}(\tau,\frak{z})e^{2\pi i {\rm tr}(M_0\tau')}$$
viewed as a subseries of the Fourier expansion of $F$.
We apply the operator $\Theta^{[j]}$ to
this subseries of $F$;
then its $(a^o,b^o)$ entry is just
$$d_0\cdot \varphi_{M_0}(\tau,\frak{z})e^{2\pi i {\rm tr}(M_0\tau')}$$
Now we may proceed as in \cite{Bo-Ki}:
For all $R\in \Lambda_n$  the $R$-th Fourier coefficients of this
series should be congruent zero modulo $p$
unless $R$ is of rank $n-1$.
This implies that in the theta expansion of $\varphi_{M_0}$
a modular form $h_0$ of weight $k-\frac{n-1}{2}$ appears, which should
be congruent to a (nonzero) constant modulo $p$.
The requested congruence follows from this as in \cite{Bo-Ki}.
\qed

\subsection{Proof of Theorem \ref{Main2}}
(1) From Proposition \ref{ThetaOp}, the assumption $\Theta ^{[1]}(F)\equiv 0$ mod $p$ implies that,
for any $T\in \Lambda _n$ satisfying $a_F(T)\not \equiv 0$ mod $p$, we have $p\mid \varepsilon(T)$.
In particular, all diagonal components of such $T$ are divisible by $p$.
We fix one of them such that $T\neq 0_n$ and denote it by $T_0$.
The existence of such $T_0$ is guaranteed by the assumption $F\not \equiv c$ mod $p$.
Let ${\rm diag}(T_0)=(d_1,d_2,\cdots ,d_{n})$ (with $p\mid d_i$ for any $i$).
Since $T_0\neq 0_n$, we may assume that $d_1>0$ by changing $T_0$ in its ${\rm GL}_n(\mathbb{Z})$-equivalence class.

Consider the integral extract for $T_0$ defined by the first author and Nagaoka \cite{Bo-Na3};
\begin{align*}
&f(\tau ):=\sum_l a(l)q^l,\\
 &a(l):=\sum_T a_F(T),
\end{align*}
where $T$ runs over all positive semi-definite elements of $\Lambda _n$ satisfying
\[T\equiv T_0 \bmod{M},\quad {\rm diag}(T)=(l,d_2,\cdots , d_n) \]
and $M$ is a large enough such that $(p\det T_0 ,M)=1$ and $a(d_1)\equiv a_f(T_0)\not \equiv 0$ mod $p$.
Note that $f\in M_k(\Gamma _1^{(1)}(NM^2))$.
Then $p\mid l$ when $a(l)\not \equiv 0$ mod $p$ and $f\not \equiv c$ mod $p$ for any $c\in \mathbb{Z}_{(p)}$.
Hence we have $\Theta ^{[1]}(f)\equiv 0$ mod $p$ and $\omega_{NM^2} (f)>0$.
Applying Theorem {\ref{Kat}}, we obtain $p\mid \omega _{NM^2}(f)$.
By Proposition \ref{omega}, $\omega _{NM^2}(f)\equiv k$ mod $p-1$ and
therefore $\omega _{NM^2}(f)$ and $k$ have the same parity.

If $k$ is odd, then $0<\omega _{NM^2}(f)\le k<2p-1$ and $p\mid \omega _{NM^2}(f)$.
In this case we have $\omega _{NM^2}(f)=p$.
Therefore $k=p$ or $k=2p-1$.
By the assumption $k<2p-1$, we obtain $k=p$.

If $k$ is even, then $\omega _{NM^2}(f)$ is even such that
$0<\omega _{NM^2}(f)\le k<3p-1$ and $p\mid \omega _{NM^2}(f)$.
In this case we have $\omega _{NM^2}(f)=2p$.
Therefore $k=2p$ or $k=3p-1$.
Since the assumption $k<3p-1$, we obtain $k=2p$.
This completes the proof of (1) in Theorem \ref{Main2}. \\

\noindent
(2) As in the proof of (1), we can take $f\in M_k(\Gamma _1^{(1)}(NM^2))$ ($p\nmid M$) such that
$\Theta^{[1]}(f)\equiv 0$ mod $p$, $f\not \equiv c$ mod $p$ for any $c\in \mathbb{Z}_{(p)}$ and
\[0<\omega _{NM^2}(f)\le \omega _{N}(F)\le  k. \]
To prove $k=\omega _N(F)$, we may prove $k=\omega _{NM^2}(f)$.
Now we obtain $p\mid \omega _{NM^2}(f)$ by Theorem {\ref{Kat}}.
By the assumption, we have $\omega _{NM^2}(f)\equiv k$ mod $p(p-1)$, because $\omega _{NM^2}(f)\equiv k\equiv 0$ mod $p$ and $\omega _{NM^2}(f)\equiv k$ mod $p-1$.
Then there exists $t\ge 0$ such that $k=\omega _{NM^2}(f)+tp(p-1)$.
However, from $0<k<p^2-p+1$, we have $t=0$ and hence $k=\omega _{NM^2}(f)$. This completes the proof of (2) in Theorem \ref{Main2}.\\

\noindent
(3) The statement follows immediately from Theorem \ref{vec-val}.
\qed

\subsection{Proof of Theorem \ref{Main1}}
(1)  By Proposition \ref{ThetaOp} and the assumption $\Theta ^{[j]}(F)\not \equiv 0$ mod $p$,
there exist $T\in \Lambda _n$ and a $j\times j$ submatrix $M$ of $T$ such that $a_F(T)\not \equiv 0$ mod $p$ and $p\nmid  \det M$.

By changing $T$ in its ${\rm GL}_n(\mathbb{Z})$-equivalence class, we may assume that there is a principal submatrix $M$ of size $j$ in $T$
such that $p\nmid \det M$, where principal matrix is defined as a matrix obtained by omitting the same columns and rows.
In particular, $T$ is equivalent over ${\rm GL}_n(\mathbb{Z})$ to a matrix of the form
$$T_0=\begin{pmatrix} 0 & 0 \\ 0 & M_0\end{pmatrix}$$
with $M_0\in \Lambda _{j}$, $p\nmid \det M_0$ and $a_F(T_0)\not \equiv 0$ mod $p$.

The Fourier-Jacobi expansion of $F$ can be written in the form
\[F(Z)=\sum _{M\in \Lambda _{j}}\varphi _M(\tau , \frak{z})e^{2 \pi i {\rm tr}(M\cdot \tau ')}.\]
Here, we decomposed $\mathbb{H}_n\ni Z=\left( \begin{smallmatrix} \tau & {}^t \frak{z} \\ \frak{z} & \tau ' \end{smallmatrix} \right) $ for $\tau \in \mathbb{H}_{n-j}$ and $\tau '\in \mathbb{H}_{j}$.

We consider the $M_0$-th Fourier-Jacobi coefficient
\[ \varphi _{M_0}(\tau ,\frak{z})=\sum _{ \mu } h_\mu (\tau )\Theta _{M_0}[\mu ](\tau,\frak{z}), \]
where $\mu $ runs over all elements of $\mathbb{Z}^{(n-j,j)} \cdot (2M_0) \backslash \mathbb{Z}^{(n-j,j)}$ and
\begin{align*}
h_\mu (\tau )=\sum _{L\in \Lambda _{n-j}} a_F\begin{pmatrix} L & \frac{\mu}{2} \\ \frac{{}^t \mu}{2} & M_0 \end{pmatrix}e^{2\pi i{\rm tr} ((L-\frac{1}{4}M_0^{-1}[{}^t \mu ])\tau )}.
\end{align*}
Then $h_{\mu }$ is a modular form of weight $k-\frac{j}{2}$
for $\Gamma ^{(n-j)}(4NQ)$, where $Q$ is the level of $M_0$ and we have $p\nmid Q$. Hence we have
\[H_{\mu}:=h_{\mu }(4NQ\tau )\in M_{k-\frac{j}{2}}(\Gamma _1^{(n-j)}(4^2N^2Q^2))_{\mathbb{Z}_{(p)}}.\]

Now we prove
\begin{Lem}
\label{lem2}
If the Fourier coefficient of $H_{\mu }$ at $L-\frac{1}{4}M_0^{-1}[{}^t \mu ]$ is nonzero modulo $p$, then all entries of
$$L-\frac{1}{4}M_0^{-1}[{}^t \mu ]$$
are divisible by $p$. In particular, by Proposition \ref{ThetaOp}, we have
\[\Theta ^{[1]}(H_{\mu })\equiv 0 \bmod{p}. \]
\end{Lem}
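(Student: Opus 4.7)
The plan is to translate the hypothesis $\Theta^{[j+1]}(F) \equiv 0 \pmod{p}$ into an entrywise divisibility statement for the matrix $S := L - \tfrac{1}{4} M_0^{-1}[{}^t\mu]$, via a well-chosen family of $(j+1) \times (j+1)$ submatrices of $T = \begin{pmatrix} L & \mu/2 \\ {}^t\mu/2 & M_0 \end{pmatrix}$ whose determinants pick off the individual entries of $S$. The first bookkeeping step is to observe that the Fourier coefficient of $h_\mu$ at $S$ is exactly $a_F(T)$, and $H_\mu(\tau) = h_\mu(4NQ\tau)$ is merely a rescaling; thus the hypothesis on $H_\mu$ is equivalent to $a_F(T) \not\equiv 0 \pmod{p}$, while the target conclusion is that every Fourier index of $H_\mu$ with nonzero coefficient mod $p$ is divisible by $p$ entrywise (which is what the \emph{in particular} clause requires via Proposition \ref{ThetaOp}(1) with $j=1$).

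The main computational step is a Schur complement identity. For each pair $(i,i')$ with $1 \le i, i' \le n-j$, I would form the $(j+1) \times (j+1)$ submatrix $R_{i,i'}$ of $T$ obtained by keeping rows $\{i\} \cup \{n-j+1, \dots, n\}$ and columns $\{i'\} \cup \{n-j+1, \dots, n\}$. Since $M_0$ is invertible modulo $p$ (because $p \nmid \det M_0$), Schur complement in the block $M_0$ yields
\[
\det R_{i,i'} \;=\; \det(M_0) \cdot \bigl( L - \tfrac{1}{4} M_0^{-1}[{}^t\mu] \bigr)_{i,i'} \;=\; \det(M_0) \cdot S_{i,i'}.
\]
Proposition \ref{ThetaOp}(1) applied to $\Theta^{[j+1]}$ tells us that $\Theta^{[j+1]}(F) \equiv 0 \pmod{p}$ together with $a_F(T) \not\equiv 0 \pmod{p}$ forces $p \mid \det R_{i,i'}$ for every choice of $(i,i')$; combined with $p \nmid \det M_0$, this gives $p \mid S_{i,i'}$ for all $(i,i')$, which is the first assertion of the lemma.

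The \emph{in particular} part then follows immediately from Proposition \ref{ThetaOp}(1) applied with $j=1$ to $H_\mu$, since the Fourier expansion of $H_\mu$ is indexed (up to the $p$-invertible scaling $4NQ$) exactly by the matrices $S$; so every such index with coefficient nonzero mod $p$ is entrywise divisible by $p$, whence $\Theta^{[1]}(H_\mu) \equiv 0 \pmod{p}$. I do not foresee a genuine obstacle; the only mild technicalities are the Schur complement identity and keeping the Fourier--Jacobi indexing straight. The real content of the argument is the clever choice of the particular $(j+1) \times (j+1)$ submatrices $R_{i,i'}$, which is what allows a statement about $\Theta^{[j+1]}$ to descend to a statement about $\Theta^{[1]}$ on the modular form $H_\mu$ of one lower ``block level''.
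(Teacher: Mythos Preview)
Your proof is correct and follows essentially the same approach as the paper: the paper also selects the $(j+1)\times(j+1)$ submatrices of $T=\left(\begin{smallmatrix} L & \mu/2 \\ {}^t\mu/2 & M_0\end{smallmatrix}\right)$ indexed by rows $\{i\}\cup J$ and columns $\{i'\}\cup J$ with $J=\{n-j+1,\dots,n\}$, and derives the identity $\det R_{i,i'}=\det(M_0)\cdot\bigl(L-\tfrac14 M_0^{-1}[{}^t\mu]\bigr)_{i,i'}$ (the paper phrases this via the block factorization rather than saying ``Schur complement'', but it is the same computation). The conclusion then follows exactly as you indicate, using that $p\nmid\det M_0$ and that $\Theta^{[j+1]}(F)\equiv 0\pmod p$ forces $p\mid\det R_{i,i'}$.
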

\begin{proof}
We have by a direct calculation
\begin{align*}
S&=\begin{pmatrix}L & \frac{\mu }{2} \\ \frac{{}^t\mu}{2} & M_0\end{pmatrix}\\
&=\begin{pmatrix}1_{n-j} & \frac{\mu }{2}M_0^{-1} \\ 0 & 1_{j} \end{pmatrix}
\begin{pmatrix}L-\frac{1}{4}M_0^{-1}[{}^t \mu ] & 0 \\ 0 & M_0 \end{pmatrix}
\begin{pmatrix}1_{n-j} & 0 \\ M_0^{-1}\frac{{}^t \mu }{2} & 1_{j} \end{pmatrix}.
\end{align*}
(Multi-) linear algebra shows that for $J:=\{n-j+1,\ldots ,n\}$ and $i$, $i'\in \{1,\ldots , n-j\}$, we have
\[S_{(\{i \}\cup J, \{i' \}\cup J)}^{[j]}=\left(L-\frac{1}{4}M_0^{-1}[{}^t \mu ] \right)^{[1]}_{(i,i')}\cdot \det M_0.\]
Here the notation $S^{[t]}_{(P,Q)}$ is the same as in the proof of Theorem \ref{vec-val}.

We observe that the left hand side is divisible by $p$ and $\det M_0$ is coprime
to $p$, therefore all entries of $L-\frac{1}{4}M_0^{-1}[{}^t \mu ]$ are divisible by $p$.
\end{proof}
We return to the proof of Theorem \ref{Main1}.
Let $j$ be odd. By $\Theta ^{[1]}(H_{\mu })\equiv 0 \bmod{p}$, we can easily prove $\Theta ^{[1]}(H_{\mu }^2)\equiv 0$ mod $p$.
Then $H_{\mu }^2$ is of weight $2k-j<2p-1$ and $2k-j$ is odd.
Therefore $2k-j\not \equiv 0$ mod $p-1$. This implies $H_{\mu }^2\not \equiv c$ mod $p$ for any $c\in \mathbb{Z}_{(p)}$ (see \cite{Bo-Na3}).
Hence we can apply Theorem \ref{Main2} (1) to $H_{\mu }^2$.
We conclude that $2k-j=p$ in the case where $j$ is odd.

Let $j$ be even and $k-j/2$ odd. The weight of $H_{\mu }$ is $k-j/2<2p-1$ and therefore $H_{\mu }\not \equiv c$ mod $p$ for any $c\in \mathbb{Z}_{(p)}$, because of $2k-j\not \equiv 0$ mod $p-1$.  In this case, we can directly apply Theorem \ref{Main2} (1) to $H_{\mu}$.
Hence we obtain $k-j/2=p$.

Let $j$ be even and $k-j/2$ even. Note that $k-j/2<3p-1$ by the assumption.
If $H_{\mu }\equiv c$ mod $p$ for some $c\in \mathbb{Z}_{(p)}$, then we have $k-j/2\equiv 0$ mod $p-1$,
otherwise we can apply Theorem \ref{Main2} (1) to $H_{\mu}$.
Therefore, we obtain $k-j/2\equiv 0$ mod $p-1$ or $k-j/2=2p$.
This completes the proof of (1) in Theorem \ref{Main1}.\\

\noindent
(2) Let $H_{\mu }\in M_{k-\frac{j}{2}}(\Gamma _1^{(n-j)}(4^2N^2Q^2))_{\mathbb{Z}_{(p)}}$ be the function appeared in the proof of (1).
Note that $\omega _{4^2N^2Q^2}(H_{\mu })+j/2\le \omega _N(F)\le k$.
By the assumption, we have
\[
\begin{cases} \omega  _{4^2N^2Q^2}(H_{\mu }^2)\le 2k-j<p^2-p+1 \quad &(j\ \text{odd}),\\
 \omega  _{4^2N^2Q^2}(H_{\mu })\le k-j/2<p^2-p \quad &(j\ \text{even})
\end{cases}
\quad \text{and} \quad p\mid (2k-j).
\]

To apply Theorem \ref{Main2} (2) to $H^2_{\mu}$ ($j$ odd) and $H_{\mu }$ ($j$ even),
we need to confirm that they are not constant modulo $p$.

If $j$ is odd, then $2k-j\not \equiv 0$ mod $p-1$. This implies $H^2_{\mu}\not \equiv c$ for any $c\in \mathbb{Z}_{(p)}$.
Hence we can apply Theorem \ref{Main2} (2) to $H^2_{\mu}$.
It follows that
\[2k-j=\omega _{4^2N^2Q^2}(H_{\mu }^2) \le 2\omega _{4^2N^2Q^2}(H_{\mu })\le 2\omega _N(F)-j\le 2k-j.\]
This indicates $k=\omega _N(F)$.

Let $j$ be even. Assume that $H_{\mu }\equiv c$ mod $p$ for some $c\in \mathbb{Z}_{(p)}$.
Then we have both conditions $k-j/2\equiv 0$ mod $p-1$ and $p\mid (k-j/2)$
because of the assumption of the theorem.
Then there exists $t\ge 1$ such that $2k-j=tp(p-1)$.
However this is impossible because of $2k-j<p^2-p$.
This means that $H_{\mu }\not \equiv c$ mod $p$ for any $c\in \mathbb{Z}_{(p)}$.

Hence we can apply Theorem \ref{Main2} (2) to $H_{\mu}$ and then
\[k=\omega _{4^2N^2Q^2}(H_{\mu })+j/2\le \omega _{4^2N^2Q^2}(F)\le k. \]
 Therefore we obtain $k=\omega _N(F)$.

This completes the proof of (2) in Theorem \ref{Main1}.
\qed

\section{On operators $A^{(j)}(p)$}
\label{Sec:AOP}
Following Choi-Choie-Richter \cite{C-C-R}, for a Siegel modular form $F\in M_k(\Gamma _n)_{\mathbb{Z}_{(p)}}$ with a Fourier expansion $F=\sum _T a_F(T)q^T$, we define an operator $A(p)$ (their notation is $U(p)$) as
\begin{align*}
F|A(p):=\sum _{\substack{T\in \Lambda _n \\ p|\det T}}a_F(T)q^T.
\end{align*}
We remark that this operator $A(p)$ is different from the usual $U(p)$-type Hecke operator investigated in \cite{Bo2} and elsewhere. If $p\ge n+3$, it is easy to see that $\widetilde{F|A(p)}= \widetilde{FE_{p-1}^{p+1}}-\Theta ^{p-1}(\widetilde{F}) \in \widetilde{M}_{k+p^2-1}(\Gamma _n)_p$, because $\Theta (\widetilde{F})\in \widetilde{M}_{k+p+1}(\Gamma _n)_p$ (see Theorem \ref{TOP2}). Here $E_{p-1}\in M_{p-1}(\Gamma _n)_{\mathbb{Z}_{(p)}}$ is such that $E_{p-1}\equiv 1$ mod $p$ obtained in \cite{Bo-Na}.
Essentially, this formula appeared in Dewar-Richter \cite{De-Ri}. It is not explicitly proved that $F|A(p)$ is a true modular form, we prove it here for more general operators $A^{(j)}(M)$.

Let $M$ be a positive integer. For a formal Fourier series of the form $F=\sum _{T\in \Lambda _n}a_F(T)q^T$, we set
\begin{align*}
F|A^{(j)}(M):= \sum _{T^{[j]}\equiv 0 \bmod{M}} a_F(T)q^T.
\end{align*}
Then we can prove its modularity as follows:
\begin{Thm}
\label{AOP2}
Let $k$, $j$, $n$ ($j\le n$), $M$ and $N$ be positive integers.
If $F\in M_k(\Gamma _1^{(n)}(N))$ then $F|A^{(j)}(N)\in M_k(\Gamma _1^{(n)}(NM^2))$.
In particular, if $F\in M_k(\Gamma _0^{(n)}(N), \chi)$ for a Dirichlet character $\chi $ modulo $N$, then $F|A^{(j)}(M)\in M_k(\Gamma _0^{(n)}(NM^2),\chi )$.
\end{Thm}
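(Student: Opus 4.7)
The plan is to realize $F|A^{(j)}(M)$ as a finite $\mathbb{C}$-linear combination of translates $F(Z+S/M)$, and then check the transformation law translate by translate. The key observation is that each entry of $T^{[j]}$ is a polynomial of degree $j$ in the entries of $T$, so $T^{[j]}\bmod M$ depends only on $T\bmod M$; consequently the indicator $\chi_j(T) := \mathbf{1}[T^{[j]} \equiv 0 \bmod M]$ descends to a function on the finite abelian group $\Lambda_n/M\Lambda_n$.

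\textbf{Step 1 (Fourier sieve).} Under the pairing $(T,S)\mapsto e^{2\pi i\, {\rm tr}(TS)/M}$, the Pontryagin dual of $\Lambda_n/M\Lambda_n$ is $\Lambda_n^*/M\Lambda_n^* = {\rm Sym}_n(\mathbb{Z})/M\,{\rm Sym}_n(\mathbb{Z})$. Expanding $\chi_j$ in this basis,
\[
\chi_j(T) = \sum_{S \in {\rm Sym}_n(\mathbb{Z})/M\,{\rm Sym}_n(\mathbb{Z})} c_S\, e^{2\pi i\, {\rm tr}(TS)/M},
\]
multiplying by $a_F(T)q^T$ and summing over $T$ yields
\[
F|A^{(j)}(M)(Z) = \sum_S c_S \, F(Z + S/M).
\]

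\textbf{Step 2 ($\Gamma_1^{(n)}(NM^2)$-invariance of each translate).} Put $T_S := \left(\begin{smallmatrix} 1_n & S/M \\ 0 & 1_n \end{smallmatrix}\right) \in \spl_n(\mathbb{Q})$, so $F|_k T_S(Z) = F(Z+S/M)$. For any $\gamma' \in \Gamma_1^{(n)}(NM^2)$, writing $\gamma' = \left(\begin{smallmatrix} 1+NM^2A_0 & B \\ NM^2 C_0 & 1+NM^2 D_0 \end{smallmatrix}\right)$, a direct matrix computation shows $T_S\gamma' T_S^{-1} \in \Gamma_1^{(n)}(N)$: the denominator $M$ of $S/M$ is absorbed by $NM^2$, both in the diagonal blocks and, after rearrangement of cross terms, in the $B$-block. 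Hence $(F|_k T_S)|_k \gamma' = F|_k(T_S\gamma' T_S^{-1})|_k T_S = F|_k T_S$, and summing over $S$ gives $\Gamma_1^{(n)}(NM^2)$-invariance of $F|A^{(j)}(M)$. Holomorphy on $\mathbb{H}_n$ and (for $n=1$) the cusp condition are inherited since $F|A^{(j)}(M)$ is a subseries of the Fourier expansion of $F$ supported on $T\ge 0$.

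\textbf{Step 3 (Nebentypus case).} I would extend Step 2 using that $\Gamma_0^{(n)}(NM^2)$ is generated by $\Gamma_1^{(n)}(NM^2)$ together with elements of the form $\delta_U = \left(\begin{smallmatrix} U & 0 \\ 0 & {}^tU^{-1}\end{smallmatrix}\right)$ (for $U$ in an appropriate set of representatives). Such $\delta_U$ sends the Fourier index $T$ to $U^{-t}TU^{-1}$, and the compound-matrix identity $(AB)^{[j]} = A^{[j]} B^{[j]}$ gives
\[
(U^{-t}TU^{-1})^{[j]} = (U^{-t})^{[j]}\cdot T^{[j]} \cdot (U^{-1})^{[j]}.
\]
Since $U^{[j]}$ is invertible over $\mathbb{Z}$ (its determinant is a power of $\det U = \pm 1$), the condition $T^{[j]} \equiv 0 \bmod M$ is preserved under this substitution. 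Thus the subset of Fourier coefficients kept by $A^{(j)}(M)$ is $\Gamma_0^{(n)}(NM^2)$-stable, and $F|A^{(j)}(M)$ inherits the nebentypus $\chi$ of $F$.

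The main anticipated obstacles are (i) giving a precise meaning to $T^{[j]} \equiv 0 \bmod M$ when $j\ge 2$ and $T$ has half-integer entries (the $j\times j$ minors a priori lie in $\tfrac{1}{2^j}\mathbb{Z}$, so one either assumes $(M,2)=1$ or passes to the even integral matrix $2T$); and (ii) the integrality check for the $B$-block of $T_S\gamma' T_S^{-1}$, i.e.\ for $B - AS/M + SD/M - SCS/M^2$, which relies delicately on the simultaneous $NM^2$-divisibility of $A-1$, $D-1$ and $C$ to cancel the apparent $S/M$ poles. Once these points are handled, the rest of the argument is linear-algebraic bookkeeping.
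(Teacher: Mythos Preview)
Your approach is essentially that of the paper: express $F|A^{(j)}(M)$ as a finite linear combination of translates $F(Z+S/M)$ and verify the transformation law translate by translate. Where the paper cites \cite{Bo-Na3} for the $\Gamma_1^{(n)}(NM^2)$-modularity of each arithmetic-progression subseries $\sum_{T\equiv T_0\bmod M}a_F(T)q^T$, your Step~2 supplies precisely the underlying conjugation calculation.

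The only substantive difference is in the nebentypus step. The paper handles an arbitrary $g=\left(\begin{smallmatrix}A&B\\C&D\end{smallmatrix}\right)\in\Gamma_0^{(n)}(NM^2)$ directly: conjugating the translation $\left(\begin{smallmatrix}1_n&S/M\\0&1_n\end{smallmatrix}\right)$ through $g$ yields $\tilde g\cdot\left(\begin{smallmatrix}1_n&\tilde S/M\\0&1_n\end{smallmatrix}\right)$ with $\tilde g\in\Gamma_0^{(n)}(N)$ and $\tilde S\equiv{}^tDSD\bmod M$, and one then checks that $T_0\mapsto -{}^tAT_0A$ permutes $J_0^{(j)}=\{T_0\bmod M:T_0^{[j]}\equiv 0\}$---which is exactly the Cauchy--Binet/compound-matrix fact you invoke. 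Your reduction to diagonal generators $\delta_U$ is cleaner in spirit, but the generation claim fails for $n=1$: there $\Gamma_0(NM^2)/\Gamma_1(NM^2)\cong(\Z/NM^2\Z)^\times$, whereas $\{\delta_U:U=\pm1\}$ only represents $\{\pm1\}$. This is easy to patch (treat the one-variable case separately, or simply carry out the paper's direct twisting computation, which works uniformly in $n$).

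Your anticipated obstacle (i) is not a genuine issue: whatever normalization one adopts for ``$T^{[j]}\equiv 0\bmod M$'', the minors are integer-coefficient polynomials in the entries of $T$, so the condition depends only on the coset $T\bmod M\Lambda_n$, which is all the sieve needs. Obstacle (ii) you resolve correctly.
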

\begin{Rem}
As a special case in the above, we have
\begin{align*}
&F|A^{(n)}(M)=F|A(M)=\sum _{\substack{T\in \Lambda _n \\ M\mid \det T}}a_F(T)q^T,\\
&F|A^{(1)}(M)=F|U(M)V(M)=\sum _{T\in \Lambda _n}a_F(MT)q^{MT}.
\end{align*}
Here $U(M)$ and $V(M)$ are the usual operator described as
\begin{align*}
F|U(M)=\sum _{T\in \Lambda _n}a_F(MT)q^T,\quad  F|V(M)=\sum _{T\in \Lambda _n}a_F(T)q^{MT}.
\end{align*}
For more details, see \cite{Bo2}.
\end{Rem}
\begin{proof}
We put $J:=\{T\bmod{N}\;|\;T\in \Lambda _n \}$. Note that $J$ is a finite set. Then we can find as in \cite{Bo-Na3} that
\[\sum _{T\equiv T_0 \bmod{M}} a_F(T)q^T\in M_k(\Gamma _1^{(n)}(NM^2))\]
for any $T_0\in J$. Now we consider
\[J_0^{(j)}:=\{T \bmod{M}\;|\;T\in \Lambda_n,\ T^{[j]}\equiv 0 \bmod{M} \}\subset J.\]
Then we have
\begin{align*}
F|A^{(j)}(M)=\sum _{T_0\in J_0^{(j)}}\sum _{T\equiv T_0 \bmod{M}}a_F(T)e^{2\pi i {\rm tr }(TZ)}.
\end{align*}
Hence $F|A^{(j)}(M)\in M_k(\Gamma _1^{(n)}(NM^2))$.

Assume that $F\in M_k(\Gamma _0^{(n)}(N),\chi )$.
Using the standard procedure of twisting, we show
that $F|A^{(j)}(M)\in M_k(\Gamma_0^{(n)}(NM^2),\chi )$.
If $g=\left(\begin{smallmatrix}
A & B\\ C & D\end{smallmatrix}\right)\in \Gamma^{(n)}_0(NM^2)$ we get,
using \cite{Bo-Na3}
\begin{equation}
F| A^{(j)}(M)|_k\: g=
\sum_{T_0\in J_0^{(j)}} \sum_S F |_k \left(\begin{array}{cc} 1_n & \frac{S}{M}\\
0_n & 1_n\end{array}\right)e^{-2\pi i tr(T_0 \frac{S}{M})}|_k\: g,
\label{twist1}
\end{equation}
where $S$ runs over all symmetric integral matrices of size $n$ modulo $M$.
Then we easily get
$$  \left(\begin{array}{cc} 1_n & \frac{S}{M}\\
0_n & 1_n\end{array}\right)\cdot g= \tilde{g}\cdot\left(\begin{array}{cc}
1_n & \frac{\tilde{S}}{M}\\
0_n & 1_n\end{array}\right)$$
with $\tilde{g}\in \Gamma_0^{(n)}(N)$ and some integral symmetric $\tilde{S}$
satisfying $\tilde{S}\equiv {}^t\!DSD  \bmod M$.
Note here that $\tilde{g}$ satisfies $\tilde{g}\equiv \left( \begin{smallmatrix} * & * \\ * & D \end{smallmatrix} \right)$ mod $M$.
Keeping in mind that $A{}^t\!D\equiv 1_n\bmod M$ we may rewrite (\ref{twist1}) as
$$\sum_{T_0} \sum_{\tilde{S}} \left(F|_k\: \tilde{g}\right)|_k
\left(\begin{array}{cc} 1_n & \frac{\tilde{S}}{M}\\ 0_n & 1_n\end{array}\right)
e^{-2\pi i tr(-{}^t\!AT_0A\frac{\tilde{S}}{M})}.
$$
We observe that  $F|_k\: \tilde{g}=\chi (\det D)F$ and
$T_0 \longmapsto \tilde{T_0}:= -{}^t\!A T_0A$ just permutes the
set $J_0^{(j)}$; this proves the assertion.
\end{proof}
\begin{Rem}
The proof actually shows that $F| A^{(j)}(M)$ is a modular form of
    level ${\rm lcm} (M^2,N)$.\\
(2) We have the same statement as in Theorem \ref{AOP2} for $F$ of half integral weight in the following way:
We consider $$G(Z):= F\cdot \theta^{(n)}(MZ),$$
where $\theta^{(n)}(Z)$ is the theta function introduced in Subsection \ref{sec:siegel-modular-forms}.
This is of integral weight and (obviously) we have
$$F|A^{(j)}(M) =(F|A^{(j)}(M)) \cdot \theta ^{(n)}(MZ). $$
Therefore, the statement for $F$ follows from that for $G$.

\end{Rem}

For any $F\in M_k(\Gamma _n)$, we have $F|A^{(j)}(p^m)\in M_k(\Gamma _0^{(n)}(p^{2m}))$ by Theorem \ref{AOP2}. By Theorem \ref{p-adic}, we can regard as $A^{(j)}(p^m): \widetilde{M}(\Gamma _n)_{p^l}\longrightarrow \widetilde{M}(\Gamma _n)_{p^l}$.
\begin{Prop}
For any $l\ge 1$, $m\ge 1$ and $j$ ($1\le j \le n$), we can decompose $\widetilde{M}(\Gamma _n)_{p^l}$ as
$$\widetilde{M}(\Gamma _n)_{p^l}={\rm Ker}A^{(j)}(p^m)\bigoplus {\rm Im}A^{(j)}(p^m).$$
\end{Prop}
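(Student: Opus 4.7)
The plan is to observe that $A^{(j)}(p^{m})$ acts as an idempotent on $\widetilde{M}(\Gamma_{n})_{p^{l}}$, from which the decomposition into kernel and image is immediate module theory.

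First, I would check that $A^{(j)}(p^{m})$ is well defined as an endomorphism of $\widetilde{M}(\Gamma_{n})_{p^{l}}$. Given $\widetilde{F} \in \widetilde{M}(\Gamma_{n})_{p^{l}}$, lift to $F \in M_{k}(\Gamma_{n})_{\mathbb{Z}_{(p)}}$ for some $k$. Theorem \ref{AOP2} yields $F|A^{(j)}(p^{m}) \in M_{k}(\Gamma_{0}^{(n)}(p^{2m}))$, and Theorem \ref{p-adic} (under the standing hypothesis $p \ge n+3$) then places the reduction back in $\widetilde{M}(\Gamma_{n})_{p^{l}}$. So $A^{(j)}(p^{m})$ is a well-defined $\mathbb{Z}/p^{l}\mathbb{Z}$-linear operator on $\widetilde{M}(\Gamma_{n})_{p^{l}}$.

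Next, I would verify idempotence directly from the definition. Since the selection rule $T^{[j]}\equiv 0\bmod p^{m}$ depends only on $T$, we have
\begin{equation*}
(F|A^{(j)}(p^{m}))|A^{(j)}(p^{m})
= \sum_{T^{[j]}\equiv 0 \bmod p^{m}} a_{F}(T)\,q^{T}
= F|A^{(j)}(p^{m}),
\end{equation*}
so $A^{(j)}(p^{m})^{2}=A^{(j)}(p^{m})$ on the level of Fourier series, and hence on $\widetilde{M}(\Gamma_{n})_{p^{l}}$.

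Finally, for any idempotent endomorphism $e$ of an abelian group $V$, one has $V=\ker e \oplus \operatorname{im} e$: every $v\in V$ decomposes as $v=(v-ev)+ev$ with $v-ev\in \ker e$ (by idempotence) and $ev\in \operatorname{im} e$, while $v\in \ker e \cap \operatorname{im} e$ forces $v=ew$ for some $w$, whence $v=ew=e^{2}w=ev=0$. Applying this with $e=A^{(j)}(p^{m})$ and $V=\widetilde{M}(\Gamma_{n})_{p^{l}}$ finishes the proof.

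The only non-formal step is the stabilization of $\widetilde{M}(\Gamma_{n})_{p^{l}}$ by $A^{(j)}(p^{m})$; everything else is a one-line verification. That stabilization, however, is exactly the content of Theorems \ref{AOP2} and \ref{p-adic}, which are already at our disposal, so there is no genuine obstacle.
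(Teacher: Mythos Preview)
Your proof is correct and is essentially the same as the paper's: both arguments rest on the observation that $A^{(j)}(p^{m})$ is an idempotent on $\widetilde{M}(\Gamma_{n})_{p^{l}}$, with the paper carrying out the decomposition $\widetilde{F}=(\widetilde{F}-\widetilde{F}|A^{(j)}(p^{m}))+\widetilde{F}|A^{(j)}(p^{m})$ and the intersection argument explicitly rather than invoking the abstract idempotent fact. The well-definedness step you flag is exactly what the paper establishes in the sentence preceding the proposition.
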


\begin{proof}
Let $\widetilde{F}\in \widetilde{M}(\Gamma _n)_{p^l}$. We set
\begin{align*}
\widetilde{F_1}:=\sum_{T^{[j]} \not \equiv 0 \bmod{p^m}}\widetilde{a_F(T)}q^T,\quad \widetilde{F_2}:=\sum _{T^{[j]}\equiv 0 \bmod{p^m}}\widetilde{a_F(T)}q^{T}.
\end{align*}
Namely $\widetilde{F_1}:=\widetilde{F}-\widetilde{F}|A^{(j)}(p^m)$ and $\widetilde{F_2}:=\widetilde{F}|A^{(j)}(p^m)$. Then $\widetilde{F}$ can be written as $\widetilde{F}=\widetilde{F_1}+\widetilde{F_2}$. Then $\widetilde{F_1}\in {\rm Ker}A^{(j)}(p^m)$, $\widetilde{F_2}\in {\rm Im}A^{(j)}(p^m)$. This shows $\widetilde{M}(\Gamma _n)_{p^l}\subset{\rm Ker}A^{(j)}(p^m)+ {\rm Im}A^{(j)}(p^m)$. The converse inclusion is trivial. Therefore
\[\widetilde{M}(\Gamma _n)_{p^l}={\rm Ker}A^{(j)}(p^m)+ {\rm Im}A^{(j)}(p^m).\]

We shall prove that the summation of the right hand side is direct. Let $\widetilde{F}\in {\rm Ker}A^{(j)}(p^m) \cap {\rm Im}A^{(j)}(p^m)$. Then $\widetilde{F}=\widetilde{G}|A^{(j)}(p^m)$ for some $\widetilde{G}\in \widetilde{M}(\Gamma _n)_{p^l}$. This implies
\[\widetilde{F}=\sum _{T^{[j]}\equiv 0 \bmod{p^m}}\widetilde{a_G(T)}q^T.\]
On the other hand, it follows from $\widetilde{F}\in {\rm Ker}A^{(j)}(p^m)$ that
\[ \widetilde{F}|A^{(j)}(p^m)=\widetilde{F}=0.\]
Hence we have ${\rm Ker}A^{(j)}(p^m)\cap {\rm Im}A^{(j)}(p^m)=0$.
\end{proof}
\begin{Rem}
Similarly we have also
$$\widetilde{M}(\Gamma _n)_{p^l}={\rm Ker}\Theta ^m\bigoplus {\rm Im}\Theta ^{m}, $$
for any $l$ and $m$ with $1\le l\le m$.
\end{Rem}

We consider the action of $A^{(j)}(p^m)$ on the space of $p$-adic modular forms:
\begin{Prop}
If $F$ is a $p$-adic modular form of degree $n$, then $F|A^{(j)}(p^m)$ is a $p$-adic modular form of degree $n$ for any $1\le j\le n$ and $m\ge 1$.
\end{Prop}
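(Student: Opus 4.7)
The plan is to combine the defining sequence for $F$ with the modularity statement of Theorem \ref{AOP2} and the level-reduction statement of Theorem \ref{p-adic} via a diagonal argument. By assumption there is a sequence $G_l \in M_{k_l}(\Gamma_n)_{\mathbb{Z}_{(p)}}$ with $\nu_p(F-G_l)\to \infty$. The operator $A^{(j)}(p^m)$ simply discards the Fourier coefficients at $T$ with $T^{[j]}\not\equiv 0 \bmod p^m$ and keeps the others, so it is continuous (in fact non-expanding) with respect to $\nu_p$ on formal $q$-expansions. Consequently
\[
\nu_p\bigl(F|A^{(j)}(p^m) - G_l|A^{(j)}(p^m)\bigr) \;\ge\; \nu_p(F-G_l) \;\longrightarrow\; \infty,
\]
so $G_l|A^{(j)}(p^m) \to F|A^{(j)}(p^m)$ $p$-adically.

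Next, I would invoke Theorem \ref{AOP2} to conclude that each $G_l|A^{(j)}(p^m)$ is a genuine modular form lying in $M_{k_l}(\Gamma_0^{(n)}(p^{2m}))_{\mathbb{Z}_{(p)}}$. Under the standing hypothesis $p\ge n+3$, Theorem \ref{p-adic} then ensures that every such form is itself a $p$-adic modular form of degree $n$: there exist classical modular forms $H_{l,s}\in M_{k_{l,s}}(\Gamma_n)_{\mathbb{Z}_{(p)}}$ with $H_{l,s} \to G_l|A^{(j)}(p^m)$ $p$-adically as $s\to\infty$.

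Finally, the conclusion follows by a standard diagonal extraction. For each $l$, choose $s(l)$ large enough that
\[
\nu_p\bigl(G_l|A^{(j)}(p^m) - H_{l,s(l)}\bigr) \;\ge\; l.
\]
Setting $\widetilde{H}_l := H_{l,s(l)}$ and combining with the displayed bound above, we obtain
\[
\nu_p\bigl(F|A^{(j)}(p^m) - \widetilde{H}_l\bigr) \;\ge\; \min\bigl\{\nu_p(F-G_l),\, l\bigr\} \;\longrightarrow\; \infty,
\]
exhibiting $F|A^{(j)}(p^m)$ as a $p$-adic limit of classical modular forms for $\Gamma_n$. I do not anticipate a real obstacle here; the only subtle point is the continuity of $A^{(j)}(p^m)$ on $q$-expansions, which is immediate since the operator is coefficient-wise and idempotent-like. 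The hypothesis $p\ge n+3$ is needed only to invoke Theorem \ref{p-adic} to pass from level $p^{2m}$ back to level one.
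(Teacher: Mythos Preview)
Your proposal is correct and follows essentially the same route as the paper: pass to the approximating sequence $G_l$, use Theorem~\ref{AOP2} to place $G_l|A^{(j)}(p^m)$ in $M_{k_l}(\Gamma_0^{(n)}(p^{2m}))$, and invoke Theorem~\ref{p-adic} to see each is a $p$-adic modular form. The paper simply concludes with ``a $p$-adic limit of $p$-adic modular forms is a $p$-adic modular form,'' whereas you spell out the diagonal extraction explicitly and note the needed hypothesis $p\ge n+3$; both are welcome clarifications but do not change the underlying argument.
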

\begin{proof}
Since $F$ is a $p$-adic modular form, there exists a sequence $\{G_{l}\in M_{k_l}(\Gamma _n)_{\mathbb{Z}_{(p)}}\}_{l}$ such that $F\equiv G_l$ mod $p^l$. Then we have $F|A^{(j)}(p^m) \equiv G_{l}|A^{(j)}(p^m)$ mod $p^l$. By Theorem \ref{AOP2}, $G_l|A^{(j)}(p^m)\in M_{k_l}(\Gamma _0^{(n)}(p^{2m}))_{\mathbb{Z}_{(p)}}$ holds. By Theorem \ref{p-adic}, $G_l|A^{(j)}(p^m)$ is also a $p$-adic modular form. Therefore $F|A^{(j)}(p^m)$ is a limit of a sequence of $p$-adic modular forms. This implies the assertion.
\end{proof}


\section{Examples}
\label{Ex:0}
In this section, we introduce some examples of elements of the mod $p$ kernel of $\Theta ^{[j]}$ and analyze the filtrations of some of them.
\subsection{By the Siegel-Eisenstein series}
Let $E_k^{(n)}$ be the Siegel-Eisenstein series of weight $k$ of degree $n$, where $k > n + 1$ is an even integer.
Let $p$ be a prime and $n$ a positive even integer such that $p\equiv (-1)^{\frac{n}{2}}$ mod $4$ and $p>n+3$.
We set $k_{(n,p)}:=\frac{n+p-1}{2}$. By Nagaoka's result \cite{Na}, $E^{(n)}_{k_{(n,p)}}$ is an element of the mod $p$ kernel of $\Theta$. Note that this is mod $p$ non-singular by the result of \cite{Bo-Ki}.
It follows from $k_{(n,p)}<p$ that
\[\omega _1 \left(E^{(n)}_{k_{(n,p)}}\right)=k_{(n,p)}.\]

As an easy application of Theorem \ref{Main1} (1), we can prove $\Theta ^{[n-1]}\left(E^{(n)}_{k_{(n,p)}}\right)\not \equiv 0$ mod $p$ as follows: We can find an integer $1\le j_0\le n-1$ such that $j_0$ is the max of positive integers $j$ satisfying $\Theta ^{[j]}\left(E^{(n)}_{k_{(n,p)}}\right)\not \equiv 0$ mod $p$. Applying Theorem \ref{Main1} (1), we have $n+p-1-j_0=p$.
This implies $j_0=n-1$.

\subsection{By theta series}
In \cite{Bo-Ko-Na}, we use certain theta series attached to quadratic forms
to construct several types of modular forms
in the kernel of theta operators mod $p$. We compute $\omega_N$ for some
cases, here $N$ can be an arbitrary number coprime to $p$.
In this way we confirm that the constructions in \cite{Bo-Ko-Na} are the best
possible ones in the sense that the level one forms obtained are
of smallest possible weight.

\paragraph{First case:} Here $S$ is an even positive definite quadratic form
of (even) rank $n$, exact level $p$ and $\det(S)=p^2$.

We showed that the normalized theta series
$$\theta_S^{(n)}(Z):=\frac{1}{\sharp {\rm Aut}_{\mathbb Z}(S)}\sum_{X\in {\mathbb Z}^{(n,n)}} e^{\pi i {\rm tr}({}^t\! XSXZ)}$$
is congruent mod $p$ to a level one form $F$ of weight
$$k=\frac{n}{2}+(p-1),$$
where
$${\rm Aut}_{\mathbb Z}(S)=\{A\in {\mathbb Z}^{(n,n)}\, \mid\, {}^t\!ASA=S\}.$$
Then $a_F(S)=1$,
in particular, $F\not \equiv 0$ mod $p$ and
$$\Theta^{[n-1]}(F)\equiv 0\bmod{p}, \qquad \Theta^{[n-2]}(F)\not \equiv 0\bmod p.$$
Here the second statement follows from
$$S^{[n-2]}\cdot a_F(S)\not \equiv 0\bmod p.$$
Since $j=n-2$ (even) and
$$k-\frac{j}{2}=k-\frac{n-2}{2}= p<p^2-p, $$
we can apply Theorem \ref{Main2} (2) to $F$. This implies
$$k=\omega _1(F). $$
\paragraph{Second case:}
Here $S$ is an even positive quadratic form of (even) rank $n$, exact level $p$
with $\det(S)=p$. In this case, $\theta^{(n)}_S$ is congruent mod $p$ to a level one
form of weight $$k=\frac{n}{2}+\frac{p-1}{2}.$$
Then $\Theta^{[n]}(F)\equiv 0$ mod $p$ but $\Theta^{[n-1]}(F)\not \equiv 0$ mod $p$.
Since $j=n-1$ (odd) and $2k-j=2k-n+1=p<p^2-p+1$, we can apply Theorem \ref{Main2} (2). Therefore, in this case also we have
$$k=\omega_1(F).$$

\paragraph{Second case, with harmonic polynomial:}
Let $S$ be as
before and consider
$$\theta^{(n)}_{S,\det}(Z):=\sum_{X\in {\mathbb Z}^{(n,n)}} \det(X)e^{\pi i {\rm tr}({}^t\!XSXZ)}$$
Here we must assume in addition that ${\rm Aut}_{\mathbb Z}(S)$ does not contain
improper automorphisms (i.e. all automorphisms have determinant $+1$).
Then it was shown in \cite{Bo-Ko-Na} that this theta series is congruent to
a (cuspidal) level one modular form $F$ of weight
$$k=\frac{n}{2}+1+3\frac{p-1}{2}.$$
The proof was much more complicated than in the other cases.
Again $F$ satisfies
$\Theta^{[n]}(F)\equiv 0$ mod $p$, but $\Theta^{[n-1]}(F)
\not \equiv 0$ mod $p$.

In this case, from $j=n-1$ (odd) we have
\begin{align*}
2k-j=n+2+3(p-1)-(n-1)=3p.
\end{align*}
Then $p\mid (2k-j)$ and $2k-j=3p<p^2-p+1$ (when $p\ge 5$). Applying Theorem \ref{Main1} (2), we have
$$k=\omega_1 (F).$$

\begin{Rem} There is a missing case here, namely
$\theta^{(n)}_{S,\det}$ with $S$ of level $p$ and $\det(S)=p^2$.
Here we do not know yet a good explicit construction of a level one form $F$
congruent to $\theta^{(n)}_{S,\det}$ with low weight. A consideration similar
to the one above suggests that
$$\omega_1(F)= \frac{n}{2}+1+2(p-1)$$
should hold.
\end{Rem}
\subsection{By the operators $A^{(j)}(p)$}
\label{Ex:2}
For any $F\in M_k(\Gamma _n)_{\mathbb{Z}_{(p)}}$, we have $\Theta ^{[j]}(F|A^{(j)}(p))\equiv 0$ mod $p$ by the definition of $A^{(j)}(p)$. Namely, we can always construct elements of the mod $p$ kernel of $\Theta^{[j]}$ for any prime $p$. In particular, if $p\ge n+3$ and $j=n$, then we get of weight $k+p^2-1$ for any $k\in \mathbb{Z}_{\ge 1}$ (see Section \ref{Sec:AOP}). Moreover these examples are not necessarily of type (b) introduced in Subsection \ref{Main}. Because, we take a suitable $F$, then there exists $T\in \Lambda _n$ with $p\nmid \varepsilon (T)$ such that $a_{F|A^{(j)}(p)}(T)\not \equiv 0$ mod $p$. We remark that $F$ is an element of the mod $p$ kernel of $\Theta ^{[j]}$ if and only if $F|A^{(j)}(p)\equiv F$ mod $p$.

Let $X_{10}^{(2)}, \ X_{12}^{(2)}$ be cusp forms of degree $2$, level $1$
and weight $10, \ 12$ respectively.
We normalize them so that
\begin{math}
  a_{X_{10}}\left(\smat{1}{1/2}{1/2}{1}\right) =
  a_{X_{12}}\left(\smat{1}{1/2}{1/2}{1}\right) = 1.
\end{math}

\begin{Ex}
 We have by direct calculation
\[\omega _1(E^{(2)}_{8}|A(7))=32,\ \omega _1(E^{(2)}_{10}|A(7))=4,\ \omega _1(X_{10}|A(7))= 46.\]
All of these formulas satisfy $p\mid (2\omega _1(F)-n+1)$. Therefore Conjecture \ref{Conj} is true for these examples.
\end{Ex}

We introduce more examples of $\omega_{1}(F|A(p))$ in tables of Section \ref{Table}.
We shall explain the tables: Let $p \ge 5$ be a prime number and
$R_{p} = \FF_{p}[x_{4}, x_{6}, x_{10}, x_{12}]$ a polynomial ring over $\FF_{p}$.
For a positive integer $k$, we denote by $R_{p, k} \subset R_{p}$ the space of
isobaric polynomials of weight $k$.
We define a linear map $\psi_{k}: R_{p, k} \rightarrow \tM_{k}(\Gamma_2)_p$
by $\psi_{k}(f(x_{4}, x_{6}, x_{10}, x_{12})) = f(\tE_{4}, \tE_{6}, \tX_{10}, \tX_{12})$.
Then by Nagaoka \cite{nagaoka2000note}, $\psi_{k}$ is an isomorphism.
Let $H \in \tM_{p-1}(\Gamma_2)_{p}$ be a modular form with $H = 1$.
Therefore
\begin{equation*}
  H =
  \begin{cases}
    \tE_{4}& \text{ if } p = 5,\\
    \tE_{6} & \text{ if } p = 7.
  \end{cases}
\end{equation*}
Note that $\psi_{k}^{-1}(H)$ is a prime element of $R_{p}$.
For $F \in \tM_{k}(\Gamma_2)_{p}$, denote by $\mathrm{ord}_{H}(F)$
the maximum integer $e$ such that $\psi_{k}^{-1}(F)/(\psi_{k}^{-1}(H))^{e} \in R_{p}$.
We understand $\mathrm{ord}_{H}(0) = \infty$.

We compute images of the linear operator
\begin{equation*}
  A(p): \tM_{k}(\Gamma_2)_{p} \rightarrow \tM_{k + p^{2} - 1}(\Gamma_{2})_{p}
\end{equation*}
for a basis of $\tM_{k}(\Gamma_2)_{p}$ for even $k \le 60$ and $p = 5, 7$.
We fix a basis $\mathcal{B}_{k, p} = \left\{F_{1}, \dots, F_{m}\right\}$ of
$\tM_{k}(\Gamma_2)_{p}$ so that
\begin{equation}
  \label{eq:bkp-cond}
  \ord_{H}\left(\sum_{G \in S}a_{G}G\right)
  =\min\left\{ \ord_{H}\left(a_{G}G\right)| G \in S \right\},
\end{equation}
for any choice of $a_{G} \in \F_{p}$ for each $G \in S$.
Here $S = \left\{F | A(p): F \in \mathcal{B}_{k, p}\right\}$.
We can take such a basis as follows.
In general, let $R$ be a polynomial ring over a field and $h$ a non-zero element of $R$.
We fix a monomial order of $R$.
Let $\mathcal{M}$ be a subspace of $R$ over $K$
spanned by monomials which are not divisible by the initial term of $h$.
Since $\left\{h\right\}$ is a Gr\"{o}bner basis of the ideal $Rh$,
we can perform the reduction algorithm uniquely.
That is, for any $f \in R$, we can uniquely write $f$ as
\begin{equation}
  \label{eq:red-exp}
  f = \sum_{i = 0}^{\infty}g_{i}(f) h^{i},
\end{equation}
where $g_{i}(f) \in \mathcal{M}$ and $g_{i}(f) = 0$ for sufficiently large $i$.
Take a basis $\mathcal{B} = \left\{F_{1}, \dots, F_{m}\right\}$ of
$\tM_{k}(\Gamma_{2})_{p}$.
We put $f_{i} = \psi_{k + p^{2}-1}^{-1}\left(F_{i}|A(p)\right)$ and denote $g_{j}(f_{i})$
by the element of $\mathcal{M}$ as in \eqref{eq:red-exp} for $h = \psi^{-1}_{p - 1}(H)$.
Take a positive integer $a$ so that $g_{j}(f_{i})=0$ for all $j > a$.
Let $\mathcal{M}'$ be a subspace of $\mathcal{M}$ spanned by
$\left\{g_{j}(f_{i})\right\}_{1\le i \le m, 0 \le j}$. We fix a linear isomorphism
$\Psi: \mathcal{M}' \cong \F_{p}^{\nu}$ and put $v(f_{i}) =
\Psi(g_{0}(f_{i})) \oplus \Psi(g_{1}(f_{i}))\oplus \dots \oplus\Psi(g_{a}(f_{i}))\in \F_{p}^{\nu(a + 1)}$.
If we take a basis $\mathcal{B}$ so that the matrix $\left(v(f_{1}), \dots, v(f_{m})\right)$
is an echelon form, then the basis satisfies the condition \eqref{eq:bkp-cond}.

For example, we take $\mathcal{B}_{18, 5}$ as
\begin{equation*}
  \mathcal{B}_{18, 5} =
  \left\{
    \tEE_{4}^{3}\tEE_{6},\
    \tEE_{4}^{3}\tEE_{6} + 2\tEE_{6}^{3},\
    \tEE_{6}\tX_{12},\ \tEE_{4}^{2}\tX_{10}
  \right\}.
\end{equation*}
Here we simply write $E_{k}^{(2)}$ as $E_{k}$.
Then its images are given as
\begin{align*}
  \tEE_{4}^{3}\tEE_{6} | A(5) &= \tEE_{4}^{6} \tEE_{6}(\tEE_{4}^{3} - 2\tX_{12})\\
  \left(\tEE_{4}^{3}\tEE_{6} + 2\tEE_{6}^{3}\right)|A(5)
                              &= \tEE_{4}^{6}(\tEE_{4}^{3}\tEE^{6} + 2\tEE_{6}^{3} +
                                \tEE_{4}^{2}\tX_{10}),\\
  \tEE_{6}\tX_{12} | A(5) &= \tEE_{4}^{2}\tX_{10} | A(5) = 0.
\end{align*}
We omit the explicit description of $\mathcal{B}_{k, p}$ for other cases.
We note that the multiset
\begin{equation*}
  \left\{\mathrm{ord}_{H}(F|A(p))\bigm | F \in \mathcal{B}_{k, p}\right\}
\end{equation*}
does not depend on the choice of $\mathcal{B}_{k, p}$ satisfying \eqref{eq:bkp-cond}.
Define a map $\alpha_{k, p}: \mathcal{B}_{k, p} \rightarrow \Z^{2}\times \FF_{p}^{2}$
by
\begin{equation*}
  \alpha_{k, p}(F) = \left(
    \mathrm{ord}_{H}(F|A(p)),
    l, l \bmod{p}, 2l - 1 \bmod{p}
  \right),
\end{equation*}
where $l = \omega_{1}(F|A(p))$.
Tables \ref{tab:a5-image} and \ref{tab:a7-image} show the multiset
\begin{equation*}
  \left\{\alpha_{k, p}(F) \bigm | F \in \mathcal{B}_{k, p}\right\}.
\end{equation*}
Each element $[(a, b, c, d), e]$ in tables means that there exists exactly
$e$ modular forms $F \in \mathcal{B}_{18, 5}$ such that $\alpha_{k, p}(F) = (a, b, c, d)$.

Examples show that
there exists a modular form $F$ with $\omega_{1}(F|A(p)) \not \equiv 0$ mod $p$
and $2\omega_{1}(F|A(p)) - 1 \not \equiv 0$ mod $p$.
Filtrations of such modular forms in these tables are
$\left\{24, \ 42, \ 54\right\}$ if $p = 5$ and
$\left\{24, \ 48, \ 52\right\}$ if $p = 7$.
For example, a modular form of degree $2$, weight $24$, level $1$
\begin{equation*}
  F = E_{4}^{3} E_{6}^{2} + 2 E_{6}^{4} + 3 E_{4}^{2} E_{6} X_{10} +
  3 E_{4} X_{10}^{2} + 2 E_{6}^{2} X_{12} + 3 X_{12}^{2}
\end{equation*}
satisfies $\Theta^{[2]}(F) \equiv 0$ mod $5$ and $\Theta^{[1]}(F) \not \equiv 0$ mod $5$,
but we have
\begin{equation*}
  2\omega_{1}(F) - 1 \not \equiv 0 \mod{5}.
\end{equation*}
Bold elements in tables indicate those modular forms.
\subsection{More examples of filtrations}
\label{sec:example-filtr-case}
In this subsection, we show example of $\omega_{1}(F)$ for $F \in M_{k}(\Gamma_{2})_{\Z_{(p)}}$ with
$\Theta(F) \equiv 0$ mod $p$ to test the validity of Conjecture \ref{Conj}.
Here we compute the kernel of
$\Theta : \tM_{k}(\Gamma_{2})_{p} \rightarrow \tM_{k + p + 1}(\Gamma_{2})_{p}$
for $p < 80$ and an even $k \le 100$ with $b_{k + p + 1} \le 15$.
Here
\begin{equation*}
  b_{k} =
  \begin{cases}
    [k/10] & \text{ if } k \text{ is even},\\
    [(k - 5)/10] & \text{ if } k \text{ is odd}.
  \end{cases}
\end{equation*}
Note that $b_{k}$ gives the Sturm bound for $\tM_{k}(\Gamma_{2})_{p}$ (cf. \cite{C-C-K,Ki-Ta}).
We take a basis $\mathcal{B} = \left\{F_{1}, \dots, F_{m}\right\}$
of $\mathrm{Ker}\Theta$ so that the condition \eqref{eq:bkp-cond} holds for $S = \mathcal{B}$.

We understand $\ord_{H}(F)$ for $F \in \tM_{k}(\Gamma_{2})_{p}$ with an odd $k$ as follows:
By Nagaoka \cite{nagaoka2005note}, there uniquely exists $G \in \tM_{k - 35}(\Gamma_{2})_{p}$
such that $F = X_{35}G$, where $X_{35}$ is the Igusa's generator of weight $35$. Then we define
$\ord_{H}(F)= \ord_{H}(G)$.

Then we have computed the filtration $\omega_{1}(F_{i})$ for $i = 1, \dots, m$.
The table \ref{tab:wt-kernel-theta2} is of these filtrations.
The meaning of the table is as follows:
For a prime $p$, a positive integer $k$ appears in the corresponding cell if and only if
$k\le 100$, $b_{k + p + 1} \le 15$ and there exists $F \in \tM_{k}(\Gamma_{2})_{p}$
such that $F \ne 0$, $\omega_{1}(F) = k$ and $\Theta(F)=0$.

Table \ref{tab:wt-kernel-theta2} also shows that
there exists $F \in \tM_{k}(\Gamma_{2})_{p}$ such that
$\Theta(F) = 0$, $\omega_{1}(F) \not \equiv 0 \mod{p}$ and
$2\omega_{1}(F) - 1 \not \equiv 0 \mod{p}$.
The pairs $(p, \omega_{1}(F))$ for such $F$ in the table are
\begin{gather*}
  (5, 24), (5, 42), (5, 54), (5, 66), (5, 72), (5, 74), (5, 84), (5, 92), (5, 96), \\
  (7, 24), (7, 48), (7, 52), (7, 72), (7, 76), (7, 80), (7, 94), (7, 96), (11, 60), (13, 84).
\end{gather*}

\section{Tables}
\label{Table}
\subsection{Tables for filtrations of images of $A(p)$}
The following tables are of $[(a,b,c,d),e]$, where
\begin{align*}
(a,b,c,d):=\alpha_{k, p}(F) = \left(
    \mathrm{ord}_{H}(F|A(p)),
    l, l \bmod{p}, 2l - 1 \bmod{p}
  \right),
\end{align*} $l = \omega_{1}(F|A(p))$, and $e$ is the number of modular forms which have $\alpha_{k, p}(F)$.
For more details, see Subsection \ref{Ex:2}

\begin{longtable}{c|l}
  \caption{Filtrations of images of $A(5)$}
  \label{tab:a5-image}\\
  4 & \parbox{28pc}{$[(7, 0, 0, 4), 1]$} \\ \hline
  6 & \parbox{28pc}{$[(3, 18, 3, 0), 1]$} \\ \hline
  8 & \parbox{28pc}{$[(8, 0, 0, 4), 1]$} \\ \hline
  10 & \parbox{28pc}{$[(4, 18, 3, 0), 1]$, $[(\infty, 0, 0, 4), 1]$} \\ \hline
  12 & \parbox{28pc}{$[(2, 28, 3, 0), 1]$, $[(9, 0, 0, 4), 1]$, $[(\infty, 0, 0, 4), 1]$} \\ \hline
  14 & \parbox{28pc}{$[(5, 18, 3, 0), 1]$, $[(\infty, 0, 0, 4), 1]$} \\ \hline
  16 & \parbox{28pc}{$[(3, 28, 3, 0), 1]$, $[(10, 0, 0, 4), 1]$, $[(\infty, 0, 0, 4), 2]$} \\ \hline
  18 & \parbox{28pc}{$[(6, 18, 3, 0), 2]$, $[(\infty, 0, 0, 4), 2]$} \\ \hline
  20 & \parbox{28pc}{$[(4, 28, 3, 0), 2]$, $[(11, 0, 0, 4), 1]$, $[(\infty, 0, 0, 4), 2]$} \\ \hline
  22 & \parbox{28pc}{$[(2, 38, 3, 0), 1]$, $[(7, 18, 3, 0), 2]$, $[(\infty, 0, 0, 4), 3]$} \\ \hline
  24 & \parbox{28pc}{$[(0, 48, 3, 0), 2]$, $[(5, 28, 3, 0), 2]$, $\mathbf{[(6, 24, 4, 2), 1]}$, $[(12, 0, 0, 4), 1]$, $[(\infty, 0, 0, 4), 2]$} \\ \hline
  26 & \parbox{28pc}{$[(3, 38, 3, 0), 1]$, $[(8, 18, 3, 0), 2]$, $[(\infty, 0, 0, 4), 4]$} \\ \hline
  28 & \parbox{28pc}{$[(1, 48, 3, 0), 2]$, $[(6, 28, 3, 0), 2]$, $\mathbf{[(7, 24, 4, 2), 1]}$, $[(13, 0, 0, 4), 1]$, $[(\infty, 0, 0, 4), 4]$} \\ \hline
  30 & \parbox{28pc}{$[(4, 38, 3, 0), 1]$, $[(6, 30, 0, 4), 1]$, $[(9, 18, 3, 0), 2]$, $[(\infty, 0, 0, 4), 7]$} \\ \hline
  32 & \parbox{28pc}{$[(2, 48, 3, 0), 3]$, $[(7, 28, 3, 0), 2]$, $\mathbf{[(8, 24, 4, 2), 1]}$, $[(14, 0, 0, 4), 1]$, $[(\infty, 0, 0, 4), 5]$} \\ \hline
  34 & \parbox{28pc}{$[(0, 58, 3, 0), 3]$, $[(5, 38, 3, 0), 1]$, $[(7, 30, 0, 4), 1]$, $[(10, 18, 3, 0), 2]$, $[(\infty, 0, 0, 4), 7]$} \\ \hline
  36 & \parbox{28pc}{$[(3, 48, 3, 0), 4]$, $[(8, 28, 3, 0), 2]$, $\mathbf{[(9, 24, 4, 2), 1]}$, $[(15, 0, 0, 4), 1]$, $[(\infty, 0, 0, 4), 9]$} \\ \hline
  38 & \parbox{28pc}{$[(1, 58, 3, 0), 3]$, $[(6, 38, 3, 0), 1]$, $[(8, 30, 0, 4), 1]$, $[(11, 18, 3, 0), 2]$, $[(\infty, 0, 0, 4), 9]$} \\ \hline
  40 & \parbox{28pc}{$[(4, 48, 3, 0), 5]$, $[(9, 28, 3, 0), 2]$, $\mathbf{[(10, 24, 4, 2), 1]}$, $[(16, 0, 0, 4), 1]$, $[(\infty, 0, 0, 4), 12]$} \\ \hline
  42 & \parbox{28pc}{$[(2, 58, 3, 0), 4]$, $\mathbf{[(6, 42, 2, 3), 1]}$, $[(7, 38, 3, 0), 1]$, $[(9, 30, 0, 4), 1]$, $[(12, 18, 3, 0), 2]$, $[(\infty, 0, 0, 4), 13]$} \\ \hline
  44 & \parbox{28pc}{$[(0, 68, 3, 0), 3]$, $[(5, 48, 3, 0), 5]$, $[(10, 28, 3, 0), 2]$, $\mathbf{[(11, 24, 4, 2), 1]}$, $[(17, 0, 0, 4), 1]$, $[(\infty, 0, 0, 4), 12]$} \\ \hline
  46 & \parbox{28pc}{$[(3, 58, 3, 0), 5]$, $\mathbf{[(7, 42, 2, 3), 1]}$, $[(8, 38, 3, 0), 1]$, $[(10, 30, 0, 4), 1]$, $[(13, 18, 3, 0), 2]$, $[(\infty, 0, 0, 4), 17]$} \\ \hline
  48 & \parbox{28pc}{$[(1, 68, 3, 0), 3]$, $[(6, 48, 3, 0), 7]$, $[(11, 28, 3, 0), 2]$, $\mathbf{[(12, 24, 4, 2), 1]}$, $[(18, 0, 0, 4), 1]$, $[(\infty, 0, 0, 4), 17]$} \\ \hline
  50 & \parbox{28pc}{$[(4, 58, 3, 0), 5]$, $[(6, 50, 0, 4), 1]$, $\mathbf{[(8, 42, 2, 3), 1]}$, $[(9, 38, 3, 0), 1]$, $[(11, 30, 0, 4), 1]$, $[(14, 18, 3, 0), 2]$, $[(\infty, 0, 0, 4), 20]$} \\ \hline
  52 & \parbox{28pc}{$[(2, 68, 3, 0), 4]$, $[(7, 48, 3, 0), 7]$, $[(12, 28, 3, 0), 2]$, $\mathbf{[(13, 24, 4, 2), 1]}$, $[(19, 0, 0, 4), 1]$, $[(\infty, 0, 0, 4), 22]$} \\ \hline
  54 & \parbox{28pc}{$[(0, 78, 3, 0), 6]$, $[(5, 58, 3, 0), 5]$, $\mathbf{[(6, 54, 4, 2), 1]}$, $[(7, 50, 0, 4), 1]$, $\mathbf{[(9, 42, 2, 3), 1]}$, $[(10, 38, 3, 0), 1]$, $[(12, 30, 0, 4), 1]$, $[(15, 18, 3, 0), 2]$, $[(\infty, 0, 0, 4), 21]$} \\ \hline
  56 & \parbox{28pc}{$[(3, 68, 3, 0), 4]$, $[(5, 60, 0, 4), 1]$, $[(8, 48, 3, 0), 7]$, $[(13, 28, 3, 0), 2]$, $\mathbf{[(14, 24, 4, 2), 1]}$, $[(20, 0, 0, 4), 1]$, $[(\infty, 0, 0, 4), 26]$} \\ \hline
  58 & \parbox{28pc}{$[(1, 78, 3, 0), 7]$, $[(6, 58, 3, 0), 5]$, $\mathbf{[(7, 54, 4, 2), 1]}$, $[(8, 50, 0, 4), 1]$, $\mathbf{[(10, 42, 2, 3), 1]}$, $[(11, 38, 3, 0), 1]$, $[(13, 30, 0, 4), 1]$, $[(16, 18, 3, 0), 2]$, $[(\infty, 0, 0, 4), 27]$} \\ \hline
  60 & \parbox{28pc}{$[(4, 68, 3, 0), 4]$, $[(6, 60, 0, 4), 2]$, $[(9, 48, 3, 0), 7]$, $[(14, 28, 3, 0), 2]$, $\mathbf{[(15, 24, 4, 2), 1]}$, $[(21, 0, 0, 4), 1]$, $[(\infty, 0, 0, 4), 35]$}
\end{longtable}

\begin{longtable}{c|l}
  \caption{Filtrations of images of $A(7)$}
  \label{tab:a7-image}\\
    4 & \parbox{28pc}{$[(8, 4, 4, 0), 1]$} \\ \hline
    6 & \parbox{28pc}{$[(9, 0, 0, 6), 1]$} \\ \hline
    8 & \parbox{28pc}{$[(4, 32, 4, 0), 1]$} \\ \hline
    10 & \parbox{28pc}{$[(2, 46, 4, 0), 1]$, $[(9, 4, 4, 0), 1]$} \\ \hline
    12 & \parbox{28pc}{$[(0, 60, 4, 0), 2]$, $[(10, 0, 0, 6), 1]$} \\ \hline
    14 & \parbox{28pc}{$[(5, 32, 4, 0), 1]$, $[(\infty, 0, 0, 6), 1]$} \\ \hline
    16 & \parbox{28pc}{$[(3, 46, 4, 0), 2]$, $[(10, 4, 4, 0), 1]$, $[(\infty, 0, 0, 6), 1]$} \\ \hline
    18 & \parbox{28pc}{$[(1, 60, 4, 0), 2]$, $[(11, 0, 0, 6), 1]$, $[(\infty, 0, 0, 6), 1]$} \\ \hline
    20 & \parbox{28pc}{$[(6, 32, 4, 0), 2]$, $[(\infty, 0, 0, 6), 3]$} \\ \hline
    22 & \parbox{28pc}{$[(4, 46, 4, 0), 2]$, $[(11, 4, 4, 0), 1]$, $[(\infty, 0, 0, 6), 3]$} \\ \hline
    24 & \parbox{28pc}{$[(2, 60, 4, 0), 3]$, $\mathbf{[(8, 24, 3, 5), 1]}$, $[(12, 0, 0, 6), 1]$, $[(\infty, 0, 0, 6), 3]$} \\ \hline
    26 & \parbox{28pc}{$[(0, 74, 4, 0), 2]$, $[(7, 32, 4, 0), 2]$, $[(\infty, 0, 0, 6), 3]$} \\ \hline
    28 & \parbox{28pc}{$[(5, 46, 4, 0), 2]$, $[(8, 28, 0, 6), 1]$, $[(12, 4, 4, 0), 1]$, $[(\infty, 0, 0, 6), 6]$} \\ \hline
    30 & \parbox{28pc}{$[(3, 60, 4, 0), 4]$, $\mathbf{[(9, 24, 3, 5), 1]}$, $[(13, 0, 0, 6), 1]$, $[(\infty, 0, 0, 6), 5]$} \\ \hline
    32 & \parbox{28pc}{$[(1, 74, 4, 0), 3]$, $[(8, 32, 4, 0), 3]$, $[(\infty, 0, 0, 6), 6]$} \\ \hline
    34 & \parbox{28pc}{$[(6, 46, 4, 0), 3]$, $[(9, 28, 0, 6), 1]$, $[(13, 4, 4, 0), 1]$, $[(\infty, 0, 0, 6), 9]$} \\ \hline
    36 & \parbox{28pc}{$[(4, 60, 4, 0), 6]$, $\mathbf{[(10, 24, 3, 5), 1]}$, $[(14, 0, 0, 6), 1]$, $[(\infty, 0, 0, 6), 9]$} \\ \hline
    38 & \parbox{28pc}{$[(2, 74, 4, 0), 4]$, $[(9, 32, 4, 0), 3]$, $[(\infty, 0, 0, 6), 9]$} \\ \hline
    40 & \parbox{28pc}{$[(0, 88, 4, 0), 7]$, $[(7, 46, 4, 0), 3]$, $[(10, 28, 0, 6), 1]$, $[(14, 4, 4, 0), 1]$, $[(\infty, 0, 0, 6), 9]$} \\ \hline
    42 & \parbox{28pc}{$[(5, 60, 4, 0), 7]$, $\mathbf{[(11, 24, 3, 5), 1]}$, $[(15, 0, 0, 6), 1]$, $[(\infty, 0, 0, 6), 13]$} \\ \hline
    44 & \parbox{28pc}{$[(3, 74, 4, 0), 6]$, $[(10, 32, 4, 0), 3]$, $[(\infty, 0, 0, 6), 15]$} \\ \hline
    46 & \parbox{28pc}{$[(1, 88, 4, 0), 8]$, $[(8, 46, 4, 0), 4]$, $[(11, 28, 0, 6), 1]$, $[(15, 4, 4, 0), 1]$, $[(\infty, 0, 0, 6), 13]$} \\ \hline
    48 & \parbox{28pc}{$[(6, 60, 4, 0), 7]$, $\mathbf{[(8, 48, 6, 4), 1]}$, $\mathbf{[(12, 24, 3, 5), 1]}$, $[(16, 0, 0, 6), 1]$, $[(\infty, 0, 0, 6), 21]$} \\ \hline
    50 & \parbox{28pc}{$[(4, 74, 4, 0), 7]$, $[(11, 32, 4, 0), 3]$, $[(\infty, 0, 0, 6), 21]$} \\ \hline
    52 & \parbox{28pc}{$[(2, 88, 4, 0), 9]$, $\mathbf{[(8, 52, 3, 5), 1]}$, $[(9, 46, 4, 0), 4]$, $[(12, 28, 0, 6), 1]$, $[(16, 4, 4, 0), 1]$, $[(\infty, 0, 0, 6), 21]$} \\ \hline
    54 & \parbox{28pc}{$[(0, 102, 4, 0), 8]$, $[(7, 60, 4, 0), 7]$, $\mathbf{[(9, 48, 6, 4), 1]}$, $\mathbf{[(13, 24, 3, 5), 1]}$, $[(17, 0, 0, 6), 1]$, $[(\infty, 0, 0, 6), 21]$} \\ \hline
    56 & \parbox{28pc}{$[(5, 74, 4, 0), 8]$, $[(8, 56, 0, 6), 1]$, $[(12, 32, 4, 0), 3]$, $[(\infty, 0, 0, 6), 30]$} \\ \hline
    58 & \parbox{28pc}{$[(3, 88, 4, 0), 11]$, $\mathbf{[(9, 52, 3, 5), 1]}$, $[(10, 46, 4, 0), 4]$, $[(13, 28, 0, 6), 1]$, $[(17, 4, 4, 0), 1]$, $[(\infty, 0, 0, 6), 28]$} \\ \hline
    60 & \parbox{28pc}{$[(1, 102, 4, 0), 10]$, $[(8, 60, 4, 0), 9]$, $\mathbf{[(10, 48, 6, 4), 1]}$, $\mathbf{[(14, 24, 3, 5), 1]}$, $[(18, 0, 0, 6), 1]$, $[(\infty, 0, 0, 6), 30]$}
\end{longtable}

\subsection{Table for filtrations of the kernel of $\Theta^{[2]}$}
Table \ref{tab:wt-kernel-theta2} shows filtration $\omega_{1}(F)$ for $F \in M_{k}(\Gamma_{2})_{\Z_{(p)}}$
with $\Theta^{[2]}(F) \equiv 0 \mod{p}$, $k \le 100$, $p < 80$ and $b_{k + p + 1} \le 15$.
For more details, see Subsection \ref{sec:example-filtr-case}.
\begin{longtable}{c|l}
  \caption{Filtrations of the kernel of $\Theta^{[2]}$}
  \label{tab:wt-kernel-theta2}\\
  $p$ & $k$\\ \hline
  5 & \parbox{28pc}{$0$, \ $18$, \ $24$, \ $28$, \ $30$, \ $38$, \ $42$, \ $48$, \ $50$,
      \ $54$, \ $58$, \ $60$, \ $66$, \ $68$, \ $72$, \ $74$, \ $78$, \ $80$, \ $83$, \
      $84$, \ $88$, \ $90$, \ $92$, \ $93$, \ $96$, \ $98$}\\ \hline
  7 & \parbox{28pc}{ $0$, \ $4$, \ $24$, \ $28$, \ $32$, \ $46$, \ $48$, \ $52$, \ $56$, \
      $60$, \ $70$, \ $72$, \ $74$, \ $76$, \ $80$, \ $81$, \ $84$, \ $88$, \ $94$, \
      $95$, \ $96$, \ $98$ }\\ \hline
  11 & $0$, \ $6$, \ $28$, \ $44$, \ $50$, \ $60$, \ $66$, \ $72$, \ $83$, \ $88$, \ $94$\\ \hline
  13 & $0$, \ $20$, \ $46$, \ $52$, \ $59$, \ $72$, \ $78$, \ $84$, \ $98$\\ \hline
  17 & $0$, \ $26$, \ $60$, \ $68$, \ $77$, \ $94$\\ \hline
  19 & $0$, \ $10$, \ $48$, \ $76$, \ $86$\\ \hline
  23 & $0$, \ $12$, \ $35$, \ $58$, \ $92$\\ \hline
  29 & $0$, \ $44$\\ \hline
  31 & $0$, \ $16$, \ $47$, \ $78$\\ \hline
  37 & $0$, \ $56$, \ $93$\\ \hline
  41 & $0$, \ $62$\\ \hline
  43 & $0$, \ $22$\\ \hline
  47 & $0$, \ $24$, \ $71$\\ \hline
  53 & $0$, \ $80$\\ \hline
  59 & $0$, \ $30$, \ $89$\\ \hline
  61 & $0$, \ $92$\\ \hline
  67 & $0$, \ $34$\\ \hline
  71 & $0$, \ $36$\\ \hline
  73 & $0$\\ \hline
  79 & $0$, \ $40$
\end{longtable}

\section*{Acknowledgment}
The authors would like to thank Professor T.~Yamauchi for informing them
on the filtrations of Siegel modular forms mod $p$.
The second author is supported by JSPS Grant-in-Aid for Young Scientists (B) 26800026.
The  third author is partially supported by JSPS Kakenhi 23224001.


\providecommand{\bysame}{\leavevmode\hbox to3em{\hrulefill}\thinspace}
\providecommand{\MR}{\relax\ifhmode\unskip\space\fi MR }
\providecommand{\MRhref}[2]{%
  \href{http://www.ams.org/mathscinet-getitem?mr=#1}{#2}
}
\providecommand{\href}[2]{#2}

\begin{flushleft}
Siegfried B\"ocherer\\
Mathematisches Institut \\
Universit\"at Mannheim \\
68131 Mannheim, Germany\\
Email: boecherer@t-online.de
\end{flushleft}

\begin{flushleft}
  Toshiyuki Kikuta\\
  Faculty of Information Engineering\\
  Department of Information and Systems Engineering\\
  Fukuoka Institute of Technology\\
  3-30-1 Wajiro-higashi, Higashi-ku, Fukuoka 811-0295, Japan\\
  E-mail: kikuta@fit.ac.jp
\end{flushleft}

\begin{flushleft}
  Sho Takemori\\
  Department of Mathematics, Hokkaido University\\
  Kita 10, Nishi 8, Kita-Ku, Sapporo, 060-0810, Japan \\
  E-mail: takemori@math.sci.hokudai.ac.jp
\end{flushleft}

\end{document}